\newtheorem{theorem}{Theorem}[section]
\newtheorem{lemma}{Lemma}[section]
\newtheorem{definition}{Definition}[section]
\newtheorem{remark}{Remark}[section]
\def\geq{\geqslant}\def\leq{\leqslant}
\date{}
\begin{document}

\title{\bf Interpolation on Convex-set Valued Lebesgue Spaces and its Applications\\[10pt] \rm{\Large Yuxun Zhang, Jiang Zhou*}}
\maketitle
{\bf Abstract: \rm In this paper, we obtain two interpolation theorems on convex-set valued Lebesgue spaces, which generalize the Marcinkiewicz interpolation theorem and Riesz-Thorin interpolation theorem on classical Lebesgue spaces, respectively. As applications, we obtain the boundedness of convex-set valued fractional averaging operators and fractional maximal operators, and prove the reverse factorization property of matrix weights.

{\bf Key Words:\ \rm Interpolation; convex-set valued Lebesgue spaces; fractional maximal operators; matrix weights}

{\bf Mathematics Subject Classification (2020): \rm26E25; 42B25; 42B35}

\baselineskip 15pt

\section{Introduction}

In 1972, the classical $A_p$ weights were first introduced by Muckenhoupt\cite{Mur1972} during the study of weighted norm inequalities for Hardy-Littlewood maximal operator. A weight (a non-negative, measurable function that satisfies $0<\omega(x)<\infty$, a.e.) $\omega$ is said to satisfy $\omega\in A_p$ ($1<p<\infty$) if
$$[\omega]_{A_p}:=\sup_Q\left(\frac{1}{m_n(Q)}\int_{Q}\omega(x)dx\right)\left(\frac{1}{m_n(Q)}\int_{Q}\omega(x)^{\frac{1}{1-p}}dx\right)^{p-1}<\infty,$$
where the supremum is taken over all cubes $Q\subset\mathbb{R}^n$ with sides parallel to the coordinate axes, $m_n(Q)$ denotes the n-dimensional Lebesgue measure of $Q$. A weight $\omega$ is said to satisfy $\omega\in A_1$ if for all $x\in\mathbb{R}^n$,
$$M\omega(x)\leq C\omega(x).$$

For $d\in\mathbb{N}$, given a Calder\'{o}n-Zygmund singular integral operator $T$, it can be extended to an operator on vector-valued functions $\vec{f}=(f_1,f_2,\cdots,f_d)^t$ by applying it to each coordinate: $T\vec{f}:=(Tf_1,Tf_2,\cdots,Tf_d)^t$.

Denote $\mathcal{M}_d$ to be the set of all $d\times d$ real matrices, a matrix function $W$ is a mapping $W:\mathbb{R}^n\rightarrow\mathcal{M}_d$. In a series of papers in the 1990s, Nazarov, Treil and Volberg\cite{Naz1996, Tre1997, Vol1997} considered that for $1<p<\infty$, what type of positive semidefinite, self-adjoint matrix functions $W$ make the weighted inequality
\begin{equation}
\int_{\mathbb{R}^n}|W^{\frac{1}{p}}(x)T(\vec{f})(x)|^pdx\leq C\int_{\mathbb{R}^n}|W^{\frac{1}{p}}(x)\vec{f}(x)|^pdx
\end{equation}
hold, and defined the matrix weight class $A_{p,matrix}$ to be the set of such matrix functions $W$, where $Uv\in\mathbb{R}^d$ denotes the product of $U\in\mathcal{M}_d$ and $v\in\mathbb{R}^d$, $|v|$ denotes the Euclidean norm of $v\in\mathbb{R}^d$.

In 2003, Roudenko\cite{Rou2003} gave a characterization of matrix  weights: $W\in A_{p,matrix}$ $(1<p<\infty)$ if and only if
\begin{equation}
[W]_{A_{p,matrix}}:=\sup_{Q}\left(\frac{1}{m_n(Q)}\int_Q\left(\frac{1}{m_n(Q)}\int_Q\lVert W^{\frac{1}{p}}(x)W^{-\frac{1}{p}}(y)\rVert_{\mathrm{op}}^{p'}dy\right)^{\frac{p}{p'}}dx\right)^{\frac{1}{p}}<\infty,
\end{equation}
where the supremum is taken over all cubes $Q\subset\mathbb{R}^n$ with sides parallel to the coordinate axes, $\lVert U\rVert_{\mathrm{op}}$ denotes the operator norm of the matrix $U\in\mathcal{M}_d$, that is,
$$\lVert U\rVert_{\mathrm{op}}=\max_{v\in\mathbb{R}^d}\frac{|Uv|}{|v|}.$$
\begin{remark}
The following discussion indicates that matrix $A_{p,matrix}$ weights are the generalization of classical $A_p$ weights: while $d=1$, the matrix function $W$ will be a non-negative real valued function $\omega$, then the inequality (1) will turn into
$$\int_{\mathbb{R}^n}|T(f)(x)|^p\omega(x)dx\leq C\int_{\mathbb{R}^n}|f(x)|^p\omega(x)dx,$$
and the condition (2) will turn into
$$\sup_Q\left(\frac{1}{m_n(Q)}\int_Q\omega(x)dx\right)^{\frac{1}{p}}\left(\frac{1}{m_n(Q)}\int_Q\omega(x)^{-\frac{p'}{p}}dx\right)^{\frac{1}{p'}}<\infty,$$
which are the weighted norm inequality and the classical $A_p$ condition, respectively.
\end{remark}

Jones factorization theorem\cite{Jon1980} and Rubio de Francia extrapolation\cite{Rub1991} are two important theorems in classical $A_p$ weight theory. In 1996, Nazarov and Treil\cite{Naz1996} conjectured that these results can be generalized to matrix weights, which generated two longstanding open problems. In 2022, Bownik and Cruz-uribe\cite{Mar2022} solved these problems by creatively utilizing convex analysis theory (see, for instance, \cite{Aub2009, Cas1977, Roc1970}). They defined some new spaces consisted of convex-set valued functions, including $L_{\mathcal{K}}^p(\Omega,\rho)$ and their special form $L_{\mathcal{K}}^p(\Omega,|\cdot|)$ $(1\leq p\leq\infty)$, and furthermore obtained that for $1<p\leq\infty$, a norm function $\rho$ satisfying $\mathcal{A}_{p,norm}$ condition (see\cite[Definition 6.2]{Mar2022}) ensures the boundedness of convex-set valued maximal operator on $L_{\mathcal{K}}^p(\Omega,\rho)$, which enables the Rubio de Francia iteration algorithm to be used and this conjecture to be ultimately proved.

Interpolation is a useful method to obtain the boundedness of several sublinear operators. In 1939, Marcinkiewicz\cite{Mar1939} first put forward the so-called Marcinkiewicz interpolation theorem, and Zygmund\cite{Zyg1956} reintroduced it in 1956. In 1982, in order to study the commutators generated by fractional integrals and BMO functions, Chanillo \cite{Cha1982} obtained the boundedness of fractional maximal operators first introduced in\cite{Mur1974} by using a generalization of Marcinkiewicz interpolation theorem in\cite{Zyg1959}.

Riesz\cite{Rie1927} first proved the Riesz convexity theorem in 1927, which is the original version of Riesz-Thorin interpolation theorem. After that, Thorin \cite{Tho1938, Tho1948} generalized this result, and published his theses with a variety of applications. Riesz-Thorin interpolation theorem requires strong boundedness of operators at endpoints, but has the better constant and lower requirment of exponents, and is easier to be generalized. For example, in 1958, Stein and Weiss obtained the Stein-Weiss variable measure interpolation theorem\cite{Ding2008}, which can be used to obtain the reverse factorization property of $A_p$ weights.

To be specific, given $1<p_0,p_1<\infty$, suppose that $w_0\in A_{p_0}$ and $w_1\in A_{p_1}$, the classical $A_p$ weight theory\cite{Mur1972} indicates that


$$\lVert Mf\rVert_{L^{p_0}_{w_0}}\leq C_0\lVert f\rVert_{L^{p_0}_{w_0}},\ \ \lVert Mf\rVert_{L^{p_1}_{w_1}}\leq C_1\lVert f\rVert_{L^{p_1}_{w_1}},$$
where $M$ is the Hardy-Littlewood operator, and for $r>0$,
$$\lVert g\rVert_{L^r_{w}}=\left(\int_{\mathbb{R}^n}|g(x)|^rw(x)dx\right)^{\frac{1}{r}}.$$
Then by Stein-Weiss variable measure interpolation theorem\cite{Ding2008}, for
$$\frac{1}{p}=\frac{1-t}{p_0}+\frac{t}{p_1},\ \ w=w_0^{\frac{p(1-t)}{p_0}}w_1^{\frac{pt}{p_1}},\ \ C=C_0^{1-t}C_1^t,$$
there holds $\lVert Mf\rVert_{L^{p}_{w}}\leq C\lVert f\rVert_{L^{p}_{w}}$, which implies that $w\in A_p$.

In this paper, we will further investigate the properties of convex-set valued Lebesgue spaces, especially establish the interpolation theory above them, including Marcinkiewicz interpolation theorem and Riesz-Thorin interpolation theorem. These results not only enrich and develop set-valued analysis theory, but also have some applications in harmonic analysis.

The remainder of this paper is organized as follows. In Section 2, we give some necessary definitions and lemmas. In Section 3, we prove the Marcinkiewicz interpolation theorem, define the fractional averaging operator and fractional maximal operator on convex-set valued Lebesgue spaces, and obtain their boundedness. In Section 4, we prove the Riesz-Thorin interpolation theorem, and use it with some known results of norm functions to obtain the reverse factorization property of matrix weights, which is a part of Jones factorization theorem first proved in\cite{Mar2022}.

Throughout this paper we will use the following notations. We will use $\mathbb{R}^n$ to denote Euclidean space of dimension $n$, which will be the domain of functions, and use $d$ to denote the dimension of vector and set-valued functions. We will use $(\Omega,\mathcal{A},\mu)$ to denote a positive, $\sigma$-infinite, and complete measurable spaces, where $\Omega\subset\mathbb{R}^n$. The open unit ball $\{v\in\mathbb{R}^d:|v|<1\}$ will be denoted by $\boldsymbol{B}$ and its closure by $\boldsymbol{\overline{B}}$. The set of all $d\times d$, real symmetric, positive semidefinite matrices will be denoted by $\mathcal{S}_d$. For $1\leq p\leq\infty$, $p'$ will denote the conjugate index of $p$ such that $1/p+1/p'=1$, $L^p(\Omega)$ will denote the Lebesgue space of real-valued functions on $\Omega$, and $L^p(\Omega,\mathbb{R}^d)$ will denote the Lebesgue space of vector-valued functions on $\Omega$. Given two quantities $A$ and $B$, we will write $A\lesssim B$ if there exists a constant $C>0$ such that $A\leq CB$, and write $A\approx B$ if $A\lesssim B$ and $B\lesssim A$.

\section{Preliminary}
We begin with some basic definitions.
\begin{definition}\cite{Roc1970}
For $E,F\subset\mathbb{R}^d$, define $\overline{E}$ to be the closure of $E$, $E+F=\{x+y:x\in E,y\in F\}$. For $\lambda\in\mathbb{R}$, define $\lambda E=\{\lambda x:x\in E\}$.
\end{definition}

\begin{definition}\cite{Roc1970}
A set $E\subset\mathbb{R}^d$ is bounded if $E\subset B$ for some ball $B\subset\mathbb{R}^d$; $E$ is convex if for all $x,y\in E$ and $0<\lambda<1$, $\lambda x+(1-\lambda)y\in E$; $E$ is symmetric if $-E=E$.
\end{definition}

Let $\mathcal{K}(\mathbb{R}^d)$ be the collection of all closed, nonempty subsets of $\mathbb{R}^d$, and the subscripts $b,c,s$ appended to $\mathcal{K}$ will denote bounded, convex and symmetric sets, respectively. For example, $\mathcal{K}_{bcs}(\mathbb{R}^d)$ denotes bounded, convex, symmetric and closed subsets of $\mathbb{R}^d$.

\begin{definition}\cite{Roc1970}
Given a set $E\subset\mathbb{R}^d$, let $\mathrm{conv}(E)$ denote the convex hull of $E$:
$$\mathrm{conv}(E)=\left\{\sum_{i=1}^{k}\alpha_ix_i:x_i\in E, \alpha_i\geq0, \sum_{i=1}^{k}\alpha_i=1\right\},$$
and denote the closure of the convex hull of $E$ by $\overline{\mathrm{conv}}(E)$.
\end{definition}

\begin{definition}\cite{Roc1970}
A seminorm is a function $p:\mathbb{R}^d\rightarrow[0,\infty)$ that satisfies the following properties: for all $u,v\in\mathbb{R}^d$ and $\alpha\in\mathbb{R}$,
$$p(u+v)\leq p(u)+p(v),\ \ p(\alpha v)=|\alpha|p(v).$$
For a seminorm $p$ and $E\subset\mathbb{R}^d$, define
$$p(E)=\sup_{v\in E}p(v).$$
A seminorm is a norm if $p(v)=0$ equals $v=0$.
\end{definition}

\begin{definition}\cite{Mar2022}
A seminorm function $\rho$ on $\Omega$ is a mapping $\rho:\Omega\times\mathbb{R}^d\rightarrow[0,\infty)$ such that:

(i) for all $x\in\Omega$, $\rho_x(\cdot)=\rho(x,\cdot)$ is a seminorm on $\mathbb{R}^d$;

(ii) $x\mapsto\rho_x(v)$ is a measurable function for any $v\in\mathbb{R}^d$.
\end{definition}

In what follows, to simplify writing, we abbreviate $[\rho_x(E)]^p$ as $\rho_x(E)^p$. Next, definitions of measurable set-valued functions and their integrals will be introduced.
\begin{definition}\cite{Aub2009}
Given a function $F:\Omega\rightarrow\mathcal{K}(\mathbb{R}^d)$, $F$ is measurable if for every open set $U\subset\mathbb{R}^d$, $F^{-1}(U):=\{x\in\Omega:F(x)\cap U\neq\phi\}\in\mathcal{A}$.
\end{definition}

\begin{definition}\cite{Aub2009}
Given a function $F:\Omega\rightarrow\mathcal{K}(\mathbb{R}^d)$, denote $S^0(\Omega,F)$ to be the set of all measurable selection functions of $F$, that is, all measurable functions $f$ such that $f(x)\in F(x),a.e.$
\end{definition}

By\cite[Theorem 8.1.4]{Aub2009}, for a measurable function $F$, $S^0(\Omega,F)$ is not empty.

\begin{definition}\cite{Aub2009}
Given a measurable function $F:\Omega\rightarrow\mathcal{K}(\mathbb{R}^d)$, define the set of all integrable selection functions of $F$ by
$$S^1(\Omega,F):=S^0(\Omega,F)\cap L^1(\Omega,\mathbb{R}^d).$$
The Aumann integral of $F$ is the set of integrals of integrable selection functions of $F$, i.e.
$$\int_{\Omega}Fd\mu:=\left\{\int_{\Omega}fd\mu:f\in S^1(\Omega,F)\right\}.$$
\end{definition}

There may not exist integrable selection function of $F$, but the integrably bounded property ensures the existence.

\begin{definition}\cite{Aub2009}
A measurable function $F:\Omega\rightarrow\mathcal{K}(\mathbb{R}^d)$ is integrably bounded if there exists a non-negative function $k\in L^1(\Omega,\mathbb{R})$ such that
$$F(x)\subset B(0,k(x))\ \ for\ a.e.\ x\in\Omega.$$
If $\Omega$ is a metric space, say $F$ is locally integrably bounded if this holds for $k\in L_{loc}^1(\Omega,\mathbb{R})$.
\end{definition}

\begin{remark}
If $f\in L^1(\Omega,\mathbb{R}^d)$ and $F(x)=\overline{\mathrm{conv}}(\{f(x),-f(x)\})$ for $x\in\Omega$, then $F$ is integrably bounded.
\end{remark}

Similar to Lebesgue integral of real-valued functions, Aumann integral is linear and monotone.

\begin{lemma}\cite{Aub2009}
Suppose that $F_1,F_2:\Omega\rightarrow\mathcal{K}_{bcs}(\mathbb{R}^d)$ are measurable and integrably bounded. Then for any $\lambda_1,\lambda_2\in\mathbb{R}$,
$$\int_{\Omega}(\lambda_1F_1+\lambda_2F_2)d\mu=\alpha_1\int_{\Omega}F_1d\mu+\alpha_2\int_{\Omega}F_2d\mu.$$
If $F_1(x)\subset F_2(x)$ for any $x\in\Omega$, then
$$\int_{\Omega}F_1d\mu\subset\int_{\Omega}F_2d\mu.$$
\end{lemma}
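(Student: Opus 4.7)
The plan is to handle the two assertions separately. The monotonicity statement is immediate: if $F_1(x)\subset F_2(x)$ pointwise, then every measurable selection of $F_1$ is automatically a measurable selection of $F_2$, so $S^1(\Omega,F_1)\subset S^1(\Omega,F_2)$ and the inclusion of Aumann integrals follows at once from the definition.

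For the linearity identity (reading the right-hand side as $\lambda_1\int_\Omega F_1\,d\mu+\lambda_2\int_\Omega F_2\,d\mu$, since the $\alpha_i$ on the right is evidently a typographical slip for $\lambda_i$), I would prove the two set inclusions in turn. The inclusion ``$\supset$'' is the routine direction: given $f_i\in S^1(\Omega,F_i)$, the combination $\lambda_1 f_1+\lambda_2 f_2$ is measurable and integrable, and pointwise lies in $\lambda_1 F_1(x)+\lambda_2 F_2(x)$; hence it is a valid integrable selection of the sum, and its integral $\lambda_1\int f_1\,d\mu+\lambda_2\int f_2\,d\mu$ lies in $\int_\Omega(\lambda_1 F_1+\lambda_2 F_2)\,d\mu$.

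The reverse ``$\subset$'' inclusion is where the real work lies and will be the main obstacle. Given $g\in S^1(\Omega,\lambda_1 F_1+\lambda_2 F_2)$, I must produce $f_i\in S^1(\Omega,F_i)$ with $g=\lambda_1 f_1+\lambda_2 f_2$ almost everywhere. The natural device is the fibered multifunction
\[
G(x)=\{(v_1,v_2)\in F_1(x)\times F_2(x):\lambda_1 v_1+\lambda_2 v_2=g(x)\}.
\]
By hypothesis $G(x)$ is nonempty a.e.; it is closed and convex because $F_1(x),F_2(x)\in\mathcal{K}_{bcs}(\mathbb{R}^d)$ and the defining constraint is affine; and measurability of $G$ follows from measurability of the product multifunction $F_1\times F_2$, measurability of $g$, and continuity of the linear map $(v_1,v_2)\mapsto\lambda_1 v_1+\lambda_2 v_2$.

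Once $G$ is a measurable, nonempty, closed-convex valued multifunction into $\mathbb{R}^{2d}$, I would invoke the Kuratowski--Ryll-Nardzewski measurable selection theorem (available in \cite{Aub2009}) to extract a measurable selection $(f_1,f_2)$ of $G$. The integrable boundedness of each $F_i$ then yields $|f_i(x)|\le k_i(x)$ with $k_i\in L^1(\Omega,\mathbb{R})$, so $f_i\in S^1(\Omega,F_i)$; integrating the pointwise identity $g=\lambda_1 f_1+\lambda_2 f_2$ exhibits $\int_\Omega g\,d\mu$ as an element of $\lambda_1\int_\Omega F_1\,d\mu+\lambda_2\int_\Omega F_2\,d\mu$, which closes the argument. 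The entire proof rests on the selection step, and the convex-valued hypothesis is exactly what makes Kuratowski--Ryll-Nardzewski applicable to the fiber multifunction $G$.
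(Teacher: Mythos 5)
Your argument is correct, and it is worth noting that the paper itself offers no proof of this lemma: it is imported verbatim from \cite{Aub2009}, where the additivity of the Aumann integral is established by essentially the same device you use, namely a measurable selection of the fibered multifunction $G(x)=\{(v_1,v_2)\in F_1(x)\times F_2(x):\lambda_1v_1+\lambda_2v_2=g(x)\}$. All the ingredients you invoke check out: $G(x)$ is nonempty directly from the definition of the Minkowski sum (no closure is taken in Definition 2.1, so no compactness argument is needed for nonemptiness), it is closed and convex as the intersection of $F_1(x)\times F_2(x)$ with an affine subspace, its measurability follows from the stability of measurable closed-valued multifunctions under intersection with the (measurable) affine constraint, and Kuratowski--Ryll-Nardzewski then yields the selection; integrable boundedness of the $F_i$ makes the selected pair integrable. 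You also correctly identify the $\alpha_i$ on the right-hand side as a typo for $\lambda_i$. The only cases deserving an explicit word are $\lambda_i=0$, where $\lambda_iF_i\equiv\{0\}$ and the decomposition degenerates (one takes for $f_i$ any integrable selection of $F_i$, which exists by integrable boundedness), but this is a triviality rather than a gap.
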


The following lemma can be seen in the proof of\cite[Proposition 5.2]{Mar2022}.

\begin{lemma}
Suppose that $F:\Omega\rightarrow\mathcal{K}_{bcs}(\mathbb{R}^d)$ is measurable and integrably bounded, $E\subset\Omega$, then
$$\left|\int_EFd\mu\right|\leq\int_E|F|d\mu.$$
\end{lemma}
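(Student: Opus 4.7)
The quantity $\bigl|\int_E F\,d\mu\bigr|$ should be read as $\sup\{|v|:v\in\int_E F\,d\mu\}$, and $|F|$ as the scalar function $x\mapsto\sup_{w\in F(x)}|w|$. My plan is therefore to fix an arbitrary element of the Aumann integral, bound its Euclidean norm by the pointwise norm of a selection, and then take the supremum.

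First I would recall that by Definition 2.8, every $v\in\int_E F\,d\mu$ is of the form $v=\int_E f\,d\mu$ for some $f\in S^1(E,F)$. Because $F$ is integrably bounded, there is $k\in L^1(\Omega,\mathbb R)$ with $F(x)\subset B(0,k(x))$ a.e., so $|F(x)|\le k(x)$ a.e.; hence $|F|\in L^1(E)$ and the right-hand side is a well-defined finite quantity. In particular $f\in L^1(E,\mathbb R^d)$, so the vector-valued triangle inequality for Lebesgue integrals gives
\[
|v|\;=\;\Bigl|\int_E f\,d\mu\Bigr|\;\le\;\int_E |f(x)|\,d\mu(x).
\]

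Next, since $f(x)\in F(x)$ for a.e.\ $x$, we have $|f(x)|\le\sup_{w\in F(x)}|w|=|F(x)|$ a.e., whence
\[
|v|\;\le\;\int_E |f(x)|\,d\mu(x)\;\le\;\int_E |F(x)|\,d\mu(x).
\]
Taking the supremum over all $v\in\int_E F\,d\mu$ yields the claimed inequality.

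The only point requiring a moment's care is the measurability of $x\mapsto|F(x)|$, which is needed to give meaning to $\int_E|F|\,d\mu$; this follows from the measurability of $F$ (one can take the supremum over a countable dense set of selections guaranteed by the Castaing representation, or cite the standard fact that the support function $x\mapsto\sup_{w\in F(x)}\langle w,v\rangle$ is measurable for each $v$). Since this is routine within the framework already set up in the preceding lemmas, I do not expect any genuine obstacle; the proof is essentially a one-line consequence of the definition of the Aumann integral combined with the scalar/vector triangle inequality.
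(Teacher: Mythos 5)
Your proof is correct and is essentially the standard argument (the paper itself gives no proof, deferring to the proof of Proposition 5.2 in the cited reference, which proceeds exactly as you do: represent each element of the Aumann integral as $\int_E f\,d\mu$ for an integrable selection $f$, apply the vector-valued triangle inequality, and bound $|f(x)|$ by $|F(x)|$). Your remarks on the finiteness of the right-hand side via integrable boundedness and on the measurability of $x\mapsto|F(x)|$ are the right points to flag and are handled adequately.
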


This chapter ends with definitions of convex-set valued Lebesgue spaces ${L_{\mathcal{K}}^p(\Omega,\rho)}$,  convex-set valued weak Lebesgue spaces ${L_{\mathcal{K}}^{p,\infty}(\Omega,\rho)}$ and distribution functions.

\begin{definition}\cite{Mar2022}
Suppose that $\rho$ is a seminorm function on $\Omega$. For any $p\in(0,\infty)$, define $L_{\mathcal{K}}^p(\Omega,\rho)$ to be the set of all measurable functions $F:\Omega\rightarrow\mathcal{K}_{bcs}(\mathbb{R}^d)$ such that
$$\lVert F\rVert_{L_{\mathcal{K}}^p(\Omega,\rho)}:=\left(\int_{\Omega}\rho_x(F(x))^pd\mu(x)\right)^{\frac{1}{p}}<\infty,$$
and $L_{\mathcal{K}}^{p,\infty}(\Omega,\rho)$ to be the set of all such $F:\Omega\rightarrow\mathcal{K}_{bcs}(\mathbb{R}^d)$ that satisfy
$$\lVert F\rVert_{L_{\mathcal{K}}^{p,\infty}(\Omega,\rho)}:=\sup_{\lambda>0}\lambda\mu(\{x\in\Omega:\rho_x(F(x))>\lambda\})^{\frac{1}{p}}<\infty.$$
When $p=\infty$, define $L_{\mathcal{K}}^{\infty}(\Omega,\rho)$ to be the set of all such $F$ that satisfy
$$\lVert F\rVert_{L_{\mathcal{K}}^{\infty}(\Omega,\rho)}:=\mathop{\mathrm{ess}\sup}\limits_{x\in\Omega}\rho_x(F(x))<\infty.$$
\end{definition}

\begin{remark}
If $\rho_x(v)=|v|$ for any $(x,v)\in\Omega\times\mathbb{R}^d$, the space $L_{\mathcal{K}}^p(\Omega,\rho)$ will be $L_{\mathcal{K}}^p(\Omega,|\cdot|)$,
which is the generalization of $L^p(\Omega,\mathbb{R}^d)$. In fact, given $0<p\leq\infty$ and $f\in L^p(\Omega,\mathbb{R}^d)$, set $F(x)=\overline{\mathrm{conv}}(\{f(x),-f(x)\})$ for $x\in\Omega$, then $F\in L_{\mathcal{K}}^p(\Omega,|\cdot|)$ and $\lVert F\rVert_{L_{\mathcal{K}}^p(\Omega,|\cdot|)}=\lVert f\rVert_{L^p(\Omega,\mathbb{R}^d)}$.
\end{remark}

\begin{definition}
Suppose that $F:\Omega\rightarrow\mathcal{K}_{bcs}(\mathbb{R}^d)$ is measurable, $\rho$ is a seminorm function on $\Omega$. For any $p\in(0,\infty)$, define the distribution function of $F$ as
$$\omega_F(\lambda)=\mu(\{x\in\Omega:\rho_x(F(x))>\lambda\})\ \ for\ \lambda>0.$$
\end{definition}

Since $\rho_x(F(x))$ is a real-valued function in $L^p(\Omega)$, it's obvious that the following integral formula holds, we omit the proof.

\begin{lemma}
For any $p\in(0,\infty)$ and $F\in L_{\mathcal{K}}^p(\Omega,\rho)$,
$$\lVert F\rVert_{L_{\mathcal{K}}^p(\Omega,\rho)}^p=p\int_{0}^{\infty}\lambda^{p-1}\omega_F(\lambda)d\lambda.$$
\end{lemma}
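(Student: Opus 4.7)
The plan is to reduce the identity to the classical layer cake formula for scalar $L^p$ functions. Setting $g(x):=\rho_x(F(x))$, the definition of $L_{\mathcal{K}}^p(\Omega,\rho)$ directly gives
$$\lVert F\rVert_{L_{\mathcal{K}}^p(\Omega,\rho)}^p=\int_{\Omega}g(x)^p\,d\mu(x),$$
while the definition of the distribution function of $F$ gives $\omega_F(\lambda)=\mu(\{x\in\Omega:g(x)>\lambda\})$. Hence both sides of the claimed identity depend only on the real-valued function $g$, and the statement collapses to the classical identity
$$\int_{\Omega}g^p\,d\mu=p\int_0^{\infty}\lambda^{p-1}\mu(\{g>\lambda\})\,d\lambda$$
for the non-negative measurable function $g\in L^p(\Omega)$.

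To establish this scalar identity I would write, pointwise,
$$g(x)^p=p\int_0^{g(x)}\lambda^{p-1}\,d\lambda=p\int_0^{\infty}\lambda^{p-1}\chi_{\{\lambda<g(x)\}}\,d\lambda,$$
integrate in $x$ over $\Omega$, and swap the order of integration using Tonelli's theorem, which applies because $(\Omega,\mathcal{A},\mu)$ is $\sigma$-finite and the integrand is non-negative. The inner integral then produces exactly $\mu(\{x:g(x)>\lambda\})=\omega_F(\lambda)$, so the two applications of Tonelli give the desired formula after rewriting both sides in terms of $F$ and $\omega_F$.

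The only point that is not completely mechanical is the measurability of $g$, i.e. that $x\mapsto\rho_x(F(x))$ is $\mathcal{A}$-measurable; this is not immediate from the two clauses of Definition 2.5 in isolation, but it is the very fact that makes $\lVert F\rVert_{L_{\mathcal{K}}^p(\Omega,\rho)}$ well-defined and it was verified in the Bownik--Cruz-Uribe framework \cite{Mar2022} by writing $\rho_x(F(x))$ as a countable supremum of measurable selections from $F$. Once this measurability is granted, the Tonelli interchange carries no further obstacle, which is consistent with the authors' choice to omit the proof.
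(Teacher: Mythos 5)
Your proposal is correct and matches the paper's (omitted) argument exactly: the authors likewise justify the lemma by observing that $\rho_x(F(x))$ is a real-valued function in $L^p(\Omega)$, so the identity is just the classical layer cake formula applied to $g(x)=\rho_x(F(x))$. Your extra remark on the measurability of $x\mapsto\rho_x(F(x))$ is a worthwhile detail the paper glosses over, but it does not change the route.
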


\section{Marcinkiewicz interpolation theorem and its application}
We first prove Marcinkiewicz interpolation theorem.

\begin{theorem}
Let $1\leq p_0\leq q_0\leq\infty$, $1\leq p_1\leq q_1\leq\infty$, $q_0\neq q_1$, $\rho$ be a seminorm function on $\Omega$, $T$ be a sublinear operator defined on the space $L^{p_0}_\mathcal{K}(\Omega,\rho)+L^{p_1}_\mathcal{K}(\Omega,\rho)$: for all $F,G\in L^{p_0}_\mathcal{K}(\Omega,\rho)+L^{p_1}_\mathcal{K}(\Omega,\rho)$, $\lambda\in\mathbb{R}$ and $x\in\Omega$,
$$T(F+G)(x)\subset TF(x)+TG(x),\ \ T(\lambda F)(x)=\lambda TF(x).$$
Suppose that there exist two positive constants $C_0$ and $C_1$ such that
$$\lVert T(F)\rVert_{L^{q_0,\infty}_\mathcal{K}(\Omega,\rho)}\leq C_0\lVert f\rVert_{L^{p_0}_\mathcal{K}(\Omega,\rho)}\ \ for\ all\ F\in L^{p_0}_\mathcal{K}(\Omega,\rho),$$
$$\lVert T(F)\rVert_{L^{q_1,\infty}_\mathcal{K}(\Omega,\rho)}\leq C_1\lVert f\rVert_{L^{p_1}_\mathcal{K}(\Omega,\rho)}\ \ for\ all\ F\in L^{p_1}_\mathcal{K}(\Omega,\rho),$$
then for all $0<t<1$ and $F\in L^{p}_\mathcal{K}(\Omega,\rho)$, we have
\begin{equation}
\lVert T(F)\rVert_{L^{q}_\mathcal{K}(\Omega,\rho)}\leq C_t\lVert F\rVert_{L^{p}_\mathcal{K}(\Omega,\rho)},
\end{equation}
where
$$\frac{1}{p}=\frac{1-t}{p_0}+\frac{t}{p_1},\ \ \frac{1}{q}=\frac{1-t}{q_0}+\frac{t}{q_1},\ \ and\ \ C_t=2\left(\frac{q}{|q-q_0|}+\frac{q}{|q-q_1|}\right)^{\frac{1}{q}}C_0^{1-t}C_1^t.$$
\end{theorem}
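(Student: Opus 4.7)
The plan is to adapt the classical proof of the Marcinkiewicz interpolation theorem, using the real-valued function $x\mapsto\rho_x(F(x))$ in place of the scalar height $|f(x)|$ when decomposing $F$ into a ``large'' and a ``small'' piece. Because $\rho_x$ is subadditive on Minkowski sums of sets (an immediate sup argument: if $z=e+f$ with $e\in E$, $f\in F$, then $\rho_x(z)\leq\rho_x(e)+\rho_x(f)\leq\rho_x(E)+\rho_x(F)$), and because $x\mapsto\rho_x(F(x))$ is measurable, the structural steps of the classical argument all carry over once the set-valued splitting is set up correctly.

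Fix $F\in L^p_\mathcal{K}(\Omega,\rho)$, $\lambda>0$, and a cut-off constant $c>0$ to be optimized at the end. Decompose $F=F^\lambda+F_\lambda$ by
\[
F^\lambda(x)=\begin{cases}F(x),&\rho_x(F(x))>c\lambda,\\\{0\},&\text{otherwise,}\end{cases}\qquad
F_\lambda(x)=\begin{cases}\{0\},&\rho_x(F(x))>c\lambda,\\F(x),&\text{otherwise.}\end{cases}
\]
Both pieces are $\mathcal{K}_{bcs}$-valued and measurable (the relevant level set lies in $\mathcal{A}$), and at each $x$ one of the summands equals $\{0\}$, giving $F^\lambda(x)+F_\lambda(x)=F(x)$ in the Minkowski sense. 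Assuming WLOG $p_0<p_1$ (so that $p_0<p<p_1$), the standard interpolation inequalities $\rho_x(F^\lambda(x))^{p_0}\leq(c\lambda)^{p_0-p}\rho_x(F(x))^p$ on $\{\rho_x(F(x))>c\lambda\}$ and $\rho_x(F_\lambda(x))^{p_1}\leq(c\lambda)^{p_1-p}\rho_x(F(x))^p$ on its complement yield $F^\lambda\in L^{p_0}_\mathcal{K}$ and $F_\lambda\in L^{p_1}_\mathcal{K}$.

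Sublinearity then gives $T(F)(x)\subset T(F^\lambda)(x)+T(F_\lambda)(x)$, which under $\rho_x$ becomes
\[
\rho_x(T(F)(x))\leq\rho_x(T(F^\lambda)(x))+\rho_x(T(F_\lambda)(x)),
\]
so $\omega_{T(F)}(\lambda)\leq\omega_{T(F^\lambda)}(\lambda/2)+\omega_{T(F_\lambda)}(\lambda/2)$. Bounding each term by the appropriate weak-type hypothesis, inserting the result into the distribution-function formula of Lemma 2.3, interchanging the order of integration by Fubini, and evaluating the inner $\lambda$-integral (which converges because $q$ lies strictly between $q_0$ and $q_1$) produces a bound of the form $\|T(F)\|_{L^q_\mathcal{K}(\Omega,\rho)}\leq K(c)\|F\|_{L^p_\mathcal{K}(\Omega,\rho)}$. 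Optimizing the cut-off constant $c$ delivers the stated $C_t$.

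The main obstacle is the clean handling of the endpoint cases $q_0=\infty$ or $q_1=\infty$, in which the associated weak-type bound is in fact a strong pointwise bound. There $c$ must be chosen so that the image of the corresponding piece satisfies $\rho_x(T(\cdot)(x))\leq\lambda/2$ almost everywhere, forcing its contribution to $\omega_{T(F)}(\lambda)$ to vanish identically; this is the standard endpoint refinement in the classical proof and transfers without modification. The remaining ingredients — measurability of the split, the seminorm sum inequality $\rho_x(A+B)\leq\rho_x(A)+\rho_x(B)$, and the Fubini bookkeeping — are routine and rely only on the machinery already established in Section 2.
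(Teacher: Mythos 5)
Your overall architecture (split $F$ at a $\lambda$-dependent height measured by $x\mapsto\rho_x(F(x))$, use sublinearity of $T$ and subadditivity of $\rho_x$ on Minkowski sums, bound the distribution function, integrate) is the same as the paper's, and your treatment of the $q_i=\infty$ endpoints matches the paper's cases $(c_1)$, $(c_2)$, $(d)$. But there is a genuine gap in the central computation. The theorem only assumes $p_i\leq q_i$, so in general the weak-type hypothesis gives
\begin{equation*}
\omega_{T(F^{\lambda})}(\lambda/2)\;\leq\;\Bigl(\tfrac{2C_0}{\lambda}\Bigr)^{q_0}\Bigl(\int_{\{\rho_x(F(x))>c\lambda\}}\rho_x(F(x))^{p_0}\,d\mu\Bigr)^{q_0/p_0},
\end{equation*}
with the inner integral raised to the power $k_0=q_0/p_0$, which may be strictly greater than $1$. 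At that point you cannot ``interchange the order of integration by Fubini'': Fubini applies to iterated integrals of a single integrand, not to the $\lambda$-integral of a \emph{power} of an inner $\mu$-integral. Your sketch is a complete proof only in the diagonal case $p_0=q_0$, $p_1=q_1$. The paper resolves this by linearizing: it writes $I_0^{1/k_0}=\sup_{g_0}\int_0^\infty\lambda^{q-q_0-1}\bigl(\int\rho(F)^{p_0}d\mu\bigr)g_0(\lambda)\,d\lambda$ over dual functions $g_0$ in a weighted $L^{k_0'}$ space, then applies Fubini and H\"older to the linearized expression. Some device of this kind (duality, Minkowski's integral inequality, or a Hardy-type inequality) is indispensable and must appear explicitly.

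A second, smaller omission: your ``WLOG $p_0<p_1$'' is not a WLOG. The hypotheses permit $p_0=p_1$ with $q_0\neq q_1$ (the paper's cases $(b)$ and $(d)$), and then the splitting of $F$ is vacuous since there is no room for $p$ strictly between $p_0$ and $p_1$. In that situation one instead bounds $\omega_{T(F)}(\lambda)$ by each of the two weak-type estimates separately and splits the $\lambda$-integral $\int_0^\infty\lambda^{q-1}\omega_{T(F)}(\lambda)\,d\lambda$ at an optimally chosen threshold $A$. You should add this case to complete the argument.
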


\begin{proof}
Without the loss of generality, assume that $p_1\leq p_0$, and consider the following cases separately:
$$(a_1)\ p_1<p_0,\ q_1<q_0<\infty;\ \ (a_2)\ p_1<p_0,\ q_0<q_1<\infty;$$
$$(b)\ p_1=p_0,\ q_1<q_0<\infty;\ \ (c_1)\ p_1<p_0,\ q_1<q_0=\infty;$$
$$(c_2)\ p_1<p_0,\ q_0<q_1=\infty;\ \ (d)\ p_1=p_0,\ q_1<q_0=\infty.$$

\textbf{Case} $\boldsymbol{(a_1).}$ For a fixed $z>0$ and any $F\in L^{p}_\mathcal{K}(\Omega,\rho)$, split $F=F_0+F_1$, where
$$F_0(x)=\left\{\begin{array}{rcl}
F(x),\ \ \ \ \rho(F(x))\leq z,\\
\{0\},\ \ \ \ \rho(F(x))>z,\end{array}
\right.\ \ F_1(x)=\left\{\begin{array}{rcl}
\{0\},\ \ \ \ \rho(F(x))\leq z,\\
F(x),\ \ \ \ \rho(F(x))>z.\end{array}
\right.$$
By the definition of $F_0$ and $F_1$, we have
$$\int_{\Omega}\rho(F_0(x))^{p_0}d\mu(x)=\int_{\Omega}\rho(F_0(x))^{p_0-p}\rho(F_0(x))^pd\mu(x)\leq z^{p_0-p}\int_{\Omega}\rho(F(x))^pd\mu(x),$$
$$\int_{\Omega}\rho(F_1(x))^{p_1}d\mu(x)=\int_{\Omega}\rho(F_1(x))^{p_1-p}\rho(F_1(x))^pd\mu(x)\leq z^{p_1-p}\int_{\Omega}\rho(F(x))^pd\mu(x),$$
thus $F_0\in L^{p_0}_\mathcal{K}(\Omega,\rho)$ and $F_1\in L^{p_1}_\mathcal{K}(\Omega,\rho)$.\\
By the sublinearity property of $T$ and $\rho$, there holds
$$\rho(T(F)(x))\leq\rho(T(F_0)(x))+\rho(T(F_1)(x)),$$
which implies that, for any $\lambda>0$,
$$\left\{x:\rho(T(F)(x))>2\lambda\right\}\subset\left\{x:\rho(T(F_0)(x))>\lambda\right\}\cup\left\{x:\rho(T(F_1)(x))>\lambda\right\},$$
therefore
$$\omega_{T(F)}(2\lambda)\leq\omega_{T(F_0)}(\lambda)+\omega_{T(F_1)}(\lambda).$$
The weak boundedness of $T$ gives
\begin{align*}
\omega_{T(F)}(2\lambda)&\leq\frac{C_0^{q_0}}{\lambda^{q_0}}\left(\int_{\{x\in\Omega:\rho(F(x))\leq z\}}\rho(F(x))^{p_0}d\mu(x)\right)^{k_0}+\frac{C_1^{q_1}}{\lambda^{q_1}}\left(\int_{\{x\in\Omega:\rho(F(x))>z\}}\rho(F(x))^{p_1}d\mu(x)\right)^{k_1},
\end{align*}
where $k_0=q_0/p_0$, $k_1=q_1/p_1$ are not less than $1$.
Then, by Lemma 2.3, we obtain that
\begin{align*}
\lVert T(f)\rVert_{L_{\mathcal{K}}^q(\Omega,\rho)}^q&=q\int_0^{\infty}\lambda^{q-1}\omega_{T(F)}(\lambda)d\lambda=2^qq\int_0^{\infty}\lambda^{q-1}\omega_{T(F)}(2\lambda)d\lambda\\
&\leq 2^qqC_0^{q_0}\int_{0}^{\infty}\lambda^{q-q_0-1}\left(\int_{\rho(F(x))\leq z}\rho(F(x))^{p_0}d\mu(x)\right)^{k_0}d\lambda\\
&\ \ \ \ +2^qqC_1^{q_1}\int_{0}^{\infty}\lambda^{q-q_1-1}\left(\int_{\rho(F(x))>z}\rho(F(x))^{p_1}d\mu(x)\right)^{k_1}d\lambda\\
&=:2^qqC_0^{q_0}I_0+2^qqC_1^{q_1}I_1.
\end{align*}
By the dual property of weighted Lebesgue spaces, we have
$$I_0^{\frac{1}{k_0}}=\sup_{g_0}\int_{0}^{\infty}\lambda^{q-q_0-1}\left(\int_{\rho(F(x))\leq z}\rho(F(x))^{p_0}d\mu(x)\right)g_0(\lambda)d\lambda,$$
$$I_1^{\frac{1}{k_1}}=\sup_{g_1}\int_{0}^{\infty}\lambda^{q-q_1-1}\left(\int_{\rho(F(x))>z}\rho(F(x))^{p_1}d\mu(x)\right)g_1(\lambda)d\lambda,$$
where $g_0$ and $g_1$ are real-valued functions satisfying
$$\int_{0}^{\infty}\lambda^{q-q_0-1}|g_0(\lambda)|^{k'_0}d\lambda\leq 1,\ \ \int_{0}^{\infty}\lambda^{q-q_1-1}|g_1(\lambda)|^{k'_1}d\lambda\leq 1.$$
Set $z=(\lambda/A)^{\xi}$, where $A$ and $\xi$ will be determined later. By Fubini's theorem, the integral under the first 'sup' equals
\begin{align*}
&\int_{\Omega}\rho(F(x))^{p_0}\left(\int_{A\rho(F(x))^{\frac{1}{\xi}}}^{\infty}\lambda^{q-q_0-1}g_0(\lambda)d\lambda\right)d\mu(x)\\
&\ \ \ \ \leq\int_{\Omega}\rho(F(x))^{p_0}\left(\int_{A\rho(F(x))^{\frac{1}{\xi}}}^{\infty}\lambda^{q-q_0-1}d\lambda\right)^{\frac{1}{k_0}}\left(\int_{A\rho(F(x))^{\frac{1}{\xi}}}^{\infty}\lambda^{q-q_0-1}g_0(\lambda)^{\frac{1}{k'_0}}d\lambda\right)^{\frac{1}{k'_0}}d\mu(x)\\
&\ \ \ \ \leq\frac{1}{(q_0-q)^{\frac{1}{k_0}}}A^{\frac{q-q_0}{k_0}}\int_{\Omega}\rho(F(x))^{p_0+\frac{q-q_0}{k_0\xi}}d\mu(x),
\end{align*}
and the integral under the second 'sup' equals
\begin{align*}
&\int_{\Omega}\rho(F(x))^{p_1}\left(\int_{0}^{A\rho(F(x))^{\frac{1}{\xi}}}\lambda^{q-q_1-1}g_1(\lambda)d\lambda\right)d\mu(x)\\
&\ \ \ \ \leq\int_{\Omega}\rho(F(x))^{p_1}\left(\int_{0}^{A\rho(F(x))^{\frac{1}{\xi}}}\lambda^{q-q_1-1}d\lambda\right)^{\frac{1}{k_1}}\left(\int_{0}^{A\rho(F(x))^{\frac{1}{\xi}}}\lambda^{q-q_1-1}g_1(\lambda)^{\frac{1}{k'_1}}d\lambda\right)^{\frac{1}{k'_1}}d\mu(x)\\
&\ \ \ \ \leq\frac{1}{(q-q_1)^{\frac{1}{k_1}}}A^{\frac{q-q_1}{k_1}}\int_{\Omega}\rho(F(x))^{p_1+\frac{q-q_1}{k_1\xi}}d\mu(x).
\end{align*}
Note that
$$\frac{q-q_0}{k_0(p-p_0)}=\frac{\frac{1}{p}\left(\frac{1}{q_0}-\frac{1}{q}\right)}{\frac{1}{q}\left(\frac{1}{p_0}-\frac{1}{p}\right)}=\frac{\frac{t}{p}\left(\frac{1}{q_0}-\frac{1}{q_1}\right)}{\frac{t}{q}\left(\frac{1}{p_0}-\frac{1}{p_1}\right)},\ \ \frac{q-q_1}{k_1(p-p_1)}=\frac{\frac{1}{p}\left(\frac{1}{q_1}-\frac{1}{q}\right)}{\frac{1}{q}\left(\frac{1}{p_1}-\frac{1}{p}\right)}=\frac{\frac{1-t}{p}\left(\frac{1}{q_1}-\frac{1}{q_0}\right)}{\frac{1-t}{q}\left(\frac{1}{p_1}-\frac{1}{p_0}\right)},$$
we select
$$\xi=\frac{q-q_0}{k_0(p-p_0)}=\frac{q-q_1}{k_1(p-p_1)},$$
which makes
$$p_0+\frac{q-q_0}{k_0\xi}=p_1+\frac{q-q_1}{k_1\xi}=p,$$
therefore,
$$\lVert T(F)\rVert_{L_{\mathcal{K}}^q(\Omega,\rho)}^q\leq 2^qqC_0^{q_0}\frac{A^{q-q_0}}{q_0-q}\lVert F\rVert_{L_{\mathcal{K}}^p(\Omega,\rho)}^{pk_0}+ 2^qqC_1^{q_1}\frac{A^{q-q_1}}{q-q_1}\lVert F\rVert_{L_{\mathcal{K}}^p(\Omega,\rho)}^{pk_1}.$$
Next, select
$$A=C_0^{\frac{q_0}{q_0-q_1}}C_1^{\frac{q_1}{q_1-q_0}}\lVert F\rVert_{L_{\mathcal{K}}^p(\Omega,\rho)}^{\frac{p(k_0-k_1)}{q_0-q_1}},$$
therefore,
$$\lVert T(F)\rVert_{L_{\mathcal{K}}^q(\Omega,\rho)}^q\leq 2^qC_0^{q(1-t)}C_1^{qt}\left(\frac{q}{q_0-q}+\frac{q}{q-q_1}\right)\lVert F\rVert_{L_{\mathcal{K}}^p(\Omega,\rho)}^q,$$
thus we obtain (3).

\textbf{Case} $\boldsymbol{(a_2).}$ The proof of this case is similar to that in case $(a_1)$. We change the integration regions $(A\rho(F(x))^{1/\xi},\infty)$, $(0,A\rho(F(x))^{1/\xi})$ to $(0,A\rho(F(x))^{1/\xi})$, $(A\rho(F(x))^{1/\xi},\infty)$ respectively in the corresponding integers, and change the denominator $(q_0-q)^{1/k_0}$, $(q-q_1)^{1/k_1}$ to $(q-q_0)^{1/k_0}$, $(q_1-q)^{1/k_1}$. Then,
$$\lVert T(F)\rVert_{L_{\mathcal{K}}^q(\Omega,\rho)}^q\leq 2^qC_0^{q(1-t)}C_1^{qt}\left(\frac{q}{q-q_0}+\frac{q}{q_1-q}\right)\lVert F\rVert_{L_{\mathcal{K}}^p(\Omega,\rho)}^q,$$
which also implies (3).

\textbf{Case} $\boldsymbol{(b).}$ Note that $p=p_1=p_0$, by the weak boundedness of $T$, there hold
$$\omega_{T(F)}(\lambda)\leq\left(\frac{C_0\lVert F\rVert_{L_{\mathcal{K}}^p(\Omega,\rho)}}{\lambda}\right)^{q_0},\ \ \omega_{T(F)}(\lambda)\leq\left(\frac{C_1\lVert F\rVert_{L_{\mathcal{K}}^p(\Omega,\rho)}}{\lambda}\right)^{q_1}.$$
Therefore, for a certain $A$ which will be determined later, we have
\begin{align*}
\lVert T(f)\rVert_{L_{\mathcal{K}}^q(\Omega,\rho)}^q&=q\int_0^{\infty}\lambda^{q-1}\omega_{T(F)}(\lambda)d\lambda\\
&=q\left(\int_A^{\infty}\lambda^{q-1}\omega_{T(F)}(\lambda)d\lambda+\int_0^{A}\lambda^{q-1}\omega_{T(F)}(\lambda)d\lambda\right)\\
&\leq qC_0^{q_0}\lVert F\rVert_{L_{\mathcal{K}}^p(\Omega,\rho)}^{q_0}\int_{A}^{\infty}\lambda^{q-q_0-1}d\lambda+qC_1^{q_1}\lVert F\rVert_{L_{\mathcal{K}}^p(\Omega,\rho)}^{q_1}\int_{0}^{A}\lambda^{q-q_1-1}d\lambda\\
&=qC_0^{q_0}\frac{A^{q-q_0}}{q_0-q}\lVert F\rVert_{L_{\mathcal{K}}^p(\Omega,\rho)}^{q_0}+ qC_1^{q_1}\frac{A^{q-q_1}}{q-q_1}\lVert F\rVert_{L_{\mathcal{K}}^p(\Omega,\rho)}^{q_1}.
\end{align*}
Select
$$A=C_0^{\frac{q_0}{q_0-q_1}}C_1^{\frac{q_1}{q_1-q_0}}\lVert F\rVert_{L_{\mathcal{K}}^p(\Omega,\rho)},$$
therefore,
$$\lVert T(f)\rVert_{L_{\mathcal{K}}^q(\Omega,\rho)}^q\leq C_0^{q(1-t)}C_1^{qt}\left(\frac{q}{q_0-q}+\frac{q}{q-q_1}\right)\lVert F\rVert_{L_{\mathcal{K}}^p(\Omega,\rho)}^q,$$
which implies (3) with a better constant.

\textbf{Case} $\boldsymbol{(c_1).}$ If $p_0=\infty$, take $z=\delta/C_0$ in the split of $F$ in case $(a_1)$, then
$$\mathrm{ess}\sup\rho(T(F_0))\leq C_0\mathrm{ess}\sup\rho(F_0)\leq\delta,$$
which follows that $\omega_{T(F_0)}(\delta)=0$. Note that $A=C_0$, according to the similar calculation as which in case $(a_1)$, we have
$$\lVert T(F)\rVert_{L_{\mathcal{K}}^q(\Omega,\rho)}^q\leq 2^qC_0^{q(1-t)}C_1^{qt}\frac{q}{q-q_1}\lVert F\rVert_{L_{\mathcal{K}}^p(\Omega,\rho)}^{q}.$$
If $p_0<\infty$, we still select $z$ makes $\omega_{T(F_0)}(\delta)=0$, after which the proof proceeds as before.
Suppose that
$$z=\left(\frac{\delta}{A}\right)^{\xi},\ \ \textrm{where}\ \ \xi=\frac{p_0}{p_0-p},\ \ A=\eta C_0\lVert F\rVert_{L_{\mathcal{K}}^p(\Omega,\rho)}^{\frac{p}{p_0}},$$
and $\eta$ is a positive number which will be determined later. It is enough to show that for a suitable $\eta$, we have $\mathrm{ess}\sup\rho(T(F_0))\leq\delta$.
By the weak boundedness of $T$,
$$\mathrm{ess}\sup\rho(T(F_0))\leq C_0\lVert F\rVert_{L_{\mathcal{K}}^{p_0}(\Omega,\rho)}=C_0\left(p_0\int_{0}^{z}\lambda^{p_0-1}\omega_{F}(\lambda)d\lambda\right)^{\frac{1}{p_0}}.$$
It follows that we certainly have $\mathrm{ess}\sup\rho(T(F_0))\leq\delta$ if
$$C_0^{p_0}p_0\int_{0}^{z}\lambda^{p_0-1}\omega_{F}(\lambda)d\lambda\leq(Az^{\frac{1}{\xi}})^{p_0}=A^{p_0}z^{p_0-p},$$
which can be implied by
$$C_0^{p_0}p_0z^{p_0-p}\int_{0}^{\infty}\lambda^{p-1}\omega_{F}(\lambda)d\lambda\leq\eta^{p_0}C_0^{p_0}\lVert F\rVert_{L_{\mathcal{K}}^p(\Omega,\rho)}^pz^{p_0-p}.$$
Since the integral on the left is $p^{-1}\lVert F\rVert_{L_{\mathcal{K}}^p(\Omega,\rho)}^p$, the inequality is satisfied if $\eta^{p_1}\geq p_1/p$, which completes the proof of this case.

\textbf{Case} $\boldsymbol{(c_2).}$ In this case, we need only change the symbol of some exponents in case $(c_1)$ as which in case $(a_2)$. We omit the details here.

\textbf{Case} $\boldsymbol{(d).}$ By the weak boundedness of $T$, there hold
$$\mathrm{ess}\sup\rho(T(F))\leq C_0\lVert F\rVert_{L_{\mathcal{K}}^p(\Omega,\rho)},\ \ \omega_{T(F)}(\lambda)\leq\left(\frac{C_1\lVert F\rVert_{L_{\mathcal{K}}^p(\Omega,\rho)}}{\lambda}\right)^{q_1}.$$
Therefore, for $A=C_0\lVert F\rVert_{L_{\mathcal{K}}^p(\Omega,\rho)}$, we have
\begin{align*}
\lVert T(f)\rVert_{L_{\mathcal{K}}^q(\Omega,\rho)}^q&=q\int_0^{\infty}\lambda^{q-1}\omega_{T(F)}(\lambda)d\lambda=q\int_0^{A}\lambda^{q-1}\omega_{T(F)}(\lambda)d\lambda\\
&\leq qC_1^{q_1}\lVert F\rVert_{L_{\mathcal{K}}^p(\Omega,\rho)}^{q_1}\int_{0}^{A}\lambda^{q-q_1-1}d\lambda=qC_1^{q_1}\frac{A^{q-q_1}}{q-q_1}\lVert F\rVert_{L_{\mathcal{K}}^p(\Omega,\rho)}^{q_1}\\
&=C_0^{q(1-t)}C_1^{qt}\frac{q}{q-q_1}\lVert F\rVert_{L_{\mathcal{K}}^p(\Omega,\rho)}^q,
\end{align*}
which implies (3) with a better constant.
\end{proof}

The following discussion shows that Theorem 3.1 can deduce Marcinkiewicz interpolation theorem on Lebesgue spaces with an additional condition of operator. Through the same method, we can obtain Marcinkiewicz interpolation theorem on vector-valued Lebesgue spaces by Theorem 3.1.
\begin{remark}
Under the assumptions of $p_0,p_1,q_0,q_1$ in Theorem 3.1, suppose that $T_0$ is a sublinear operator bounded from $L^{p_0}(\Omega)$ to $L^{q_0}(\Omega)$, bounded from $L^{p_1}(\Omega)$ to $L^{q_1}(\Omega)$, and satisfying $|T(f)(x)|\leq|T(|f|)(x)|$. Set $d=1$, $\rho=|\cdot|$, and define an associated operator on $L^{p_0}_\mathcal{K}(\Omega,|\cdot|)+L^{p_1}_\mathcal{K}(\Omega,|\cdot|)$ as
$$T(F)(x)=[-|T_0(|F|)(x)|,|T_0(|F|)(x)|],\ \ for\ x\in\Omega.$$
It's easy to obtain that $T$ is a sublinear operator bounded from $L^{p_0}_\mathcal{K}(\Omega,|\cdot|)$ to $L^{q_0}_\mathcal{K}(\Omega,|\cdot|)$, and bounded from $L^{p_1}_\mathcal{K}(\Omega,|\cdot|)$ to $L^{q_1}_\mathcal{K}(\Omega,|\cdot|)$. Therefore, by Theorem 3.1, $T$ is bounded from $L^{p}_\mathcal{K}(\Omega,|\cdot|)$ to $L^{q}_\mathcal{K}(\Omega,|\cdot|)$.\\
For $g\in L^p(\Omega)$, define $G(x)=[-|g(x)|,|g(x)|]$ for $x\in\Omega$, we have
$$\lVert T_0g\rVert_q\leq\lVert T_0(|g|)\rVert_q=\lVert T_0(|G|)\rVert_q=\lVert TG\rVert_{L^q_{\mathcal{K}}(\Omega,|\cdot|)}\lesssim\lVert G\rVert_{L^p_{\mathcal{K}}(\Omega,|\cdot|)}=\lVert g\rVert_p,$$
which implies that $T_0$ is bounded from $L^p(\Omega)$ to $L^q(\Omega)$.
\end{remark}

Next, we define the fractional averaging operator and fractional maximal operator of locally integrably bounded functions, and use Theorem 3.1 to obtain their boundedness.

\begin{definition}
Let $F:\mathbb{R}^n\rightarrow\mathcal{K}_{bcs}(\mathbb{R}^d)$ that is locally integrably bounded, for a fixed cube $Q$ and $\alpha\in(0,1)$, define the fractional averaging operator $A_{Q,\alpha}$ by
$$A_{Q,\alpha}F(x)=\frac{1}{|Q|^{1-\alpha}}\int_{Q}F(y)dy\cdot\chi_Q(x).$$
\end{definition}

\begin{lemma}
Given any cube $Q$, the fractional averaging operator is linear: if $F,G:\mathbb{R}^n\rightarrow\mathcal{K}_{bcs}(\mathbb{R}^d)$ are locally integrably bounded mappings, $\lambda\in\mathbb{R}$, then for all $x\in\mathbb{R}^n$,
$$A_{Q,\alpha}(F+G)(x)=A_{Q,\alpha}F(x)+A_{Q,\alpha}G(x),\ \ A_{Q,\alpha}(\lambda F)(x)=\lambda A_{Q,\alpha}F(x).$$
\end{lemma}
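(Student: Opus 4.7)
The plan is to reduce both identities to Lemma 2.1 (linearity of the Aumann integral for measurable, integrably bounded, $\mathcal{K}_{bcs}$-valued functions) by unfolding the definition of $A_{Q,\alpha}$. Fix a cube $Q$ and $x\in\mathbb{R}^n$. If $x\notin Q$ then $\chi_Q(x)=0$, so by Definition 2.1 both sides of each identity collapse to $\{0\}$, and the equalities are immediate. The substantive case is $x\in Q$, where $\chi_Q(x)=1$ and, after dividing through by $|Q|^{1-\alpha}$, the claims reduce to
$$\int_Q(F+G)(y)\,dy=\int_Q F(y)\,dy+\int_Q G(y)\,dy,\qquad \int_Q \lambda F(y)\,dy=\lambda\int_Q F(y)\,dy.$$

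To invoke Lemma 2.1 on the finite-measure set $Q$, I first check its hypotheses. Since $F$ and $G$ are locally integrably bounded and $Q$ has finite Lebesgue measure, their restrictions to $Q$ are integrably bounded in the sense of Definition 2.6, which is precisely what guarantees the Aumann integrals above are defined via nonempty classes of integrable selections. The pointwise Minkowski sum $(F+G)(y)=F(y)+G(y)$ inherits convexity and symmetry trivially from $F(y),G(y)$; boundedness and closedness follow because elements of $\mathcal{K}_{bcs}(\mathbb{R}^d)$ are compact in $\mathbb{R}^d$ and the Minkowski sum of two compact sets is compact. Measurability of $F+G$ as a set-valued map is a standard consequence of a Castaing representation. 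Analogous statements hold for $\lambda F$ by Definition 2.1. With these in hand, Lemma 2.1 applied with $\lambda_1=\lambda_2=1$ yields the additivity identity, and applied with $\lambda_1=\lambda$ and $F_2\equiv\{0\}$ (or directly from the homogeneity part of the lemma) yields the scalar-multiplication identity. Multiplying through by $|Q|^{\alpha-1}\chi_Q(x)$ and using that scalar multiplication of sets commutes with Minkowski sums concludes the argument.

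The main technical point is verifying that $F+G$ and $\lambda F$ remain $\mathcal{K}_{bcs}$-valued, measurable, and integrably bounded on $Q$ so that Lemma 2.1 is applicable; once this is established the proof is a direct unfolding of the definition of $A_{Q,\alpha}$ and involves no genuine estimation. I do not anticipate any real obstacle beyond this routine bookkeeping.
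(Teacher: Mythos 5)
Your proposal is correct and follows essentially the same route as the paper: both reduce the identities to Lemma 2.1 (linearity of the Aumann integral) after unfolding the definition of $A_{Q,\alpha}$. The extra verifications you include (that $F+G$ and $\lambda F$ remain measurable, $\mathcal{K}_{bcs}$-valued, and integrably bounded on $Q$) are sound but are taken for granted in the paper's one-line computation.
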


\begin{proof}
By Lemma 2.1, we have
\begin{align*}
A_{Q,\alpha}(F+G)(x)&=\frac{1}{|Q|^{1-\alpha}}\int_{Q}(F(y)+G(y))dy\cdot\chi_Q(x)\\
&=\frac{1}{|Q|^{1-\alpha}}\int_{Q}F(y)dy\cdot\chi_Q(x)+\frac{1}{|Q|^{1-\alpha}}\int_{Q}G(y)dy\cdot\chi_Q(x)\\
&=A_{Q,\alpha}F(x)+A_{Q,\alpha}G(x),
\end{align*}
and
\begin{align*}
A_{Q,\alpha}(\lambda F)(x)&=\frac{1}{|Q|^{1-\alpha}}\int_{Q}\lambda F(y)dy\cdot\chi_Q(x)=\frac{\lambda}{|Q|^{1-\alpha}}\int_{Q}F(y)dy\cdot\chi_Q(x)=\lambda A_{Q,\alpha}F(x),
\end{align*}
which show the linearity.
\end{proof}

\begin{theorem}
Given a cube $Q$, for $0<\alpha<1$, $1\leq p<q\leq\infty$ such that $1/p-1/q=\alpha$, the fractional averaging operator $A_{Q,\alpha}:L^p_\mathcal{K}(\mathbb{R}^n,|\cdot|)\rightarrow L^q_\mathcal{K}(\mathbb{R}^n,|\cdot|)$ is bounded.
\end{theorem}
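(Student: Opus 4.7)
The plan is to apply the Marcinkiewicz interpolation theorem just established (Theorem 3.1) between the two endpoint pairs $(p_0,q_0)=(1,\tfrac{1}{1-\alpha})$ and $(p_1,q_1)=(\tfrac{1}{\alpha},\infty)$. Both pairs satisfy the scaling identity $\tfrac{1}{p_i}-\tfrac{1}{q_i}=\alpha$ that is built into $A_{Q,\alpha}$, and every $t\in(0,1)$ interpolates to exponents obeying $\tfrac{1}{p}-\tfrac{1}{q}=\alpha$ with $p\in(1,1/\alpha)$. The sublinearity hypothesis of Theorem 3.1 is free: Lemma 3.1 gives full linearity of $A_{Q,\alpha}$. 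Strong-type endpoint bounds immediately upgrade to weak-type by Chebyshev, so it suffices to prove two strong bounds.

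For the first endpoint, I would combine Lemma 2.2 with the definition of $A_{Q,\alpha}$ to produce the pointwise estimate
\begin{equation*}
|A_{Q,\alpha}F(x)|\leq\frac{1}{|Q|^{1-\alpha}}\int_Q|F(y)|\,dy\cdot\chi_Q(x),
\end{equation*}
which is constant on $Q$ and zero elsewhere. Raising to the $q_0$-th power and integrating over $\mathbb{R}^n$ gives $\lVert A_{Q,\alpha}F\rVert_{L^{q_0}_\mathcal{K}(\mathbb{R}^n,|\cdot|)}^{q_0}\leq |Q|^{1-(1-\alpha)q_0}\lVert F\rVert_{L^1_\mathcal{K}(\mathbb{R}^n,|\cdot|)}^{q_0}$, and the choice $q_0=\tfrac{1}{1-\alpha}$ kills the $|Q|$-factor. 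For the second endpoint I would start from the same pointwise bound and apply H\"older's inequality with conjugate exponents $1/\alpha$ and $1/(1-\alpha)$ on $Q$, obtaining $\lVert A_{Q,\alpha}F\rVert_{L^\infty_\mathcal{K}(\mathbb{R}^n,|\cdot|)}\leq\lVert F\rVert_{L^{1/\alpha}_\mathcal{K}(\mathbb{R}^n,|\cdot|)}$. The boundary cases $p=1$ and $p=1/\alpha$ of the theorem are thus handled directly, while the intermediate range $1<p<1/\alpha$ follows at once from Theorem 3.1.

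The only items requiring care are the side conditions of Theorem 3.1: we need $1\leq p_i\leq q_i\leq\infty$ and $q_0\neq q_1$. Since $0<\alpha<1$, the chain $1\leq 1\leq\tfrac{1}{1-\alpha}<\infty$ and $1<\tfrac{1}{\alpha}\leq\infty$ is immediate, and $\tfrac{1}{1-\alpha}\neq\infty$. I do not expect genuine obstacles; the main point to keep an eye on is that, interpolating, $1/p$ sweeps exactly $(\alpha,1)$, so $p$ sweeps exactly $(1,1/\alpha)$, which together with the two directly-verified endpoints covers the full parameter range $1\leq p<q\leq\infty$ with $\tfrac{1}{p}-\tfrac{1}{q}=\alpha$ stated in the theorem.
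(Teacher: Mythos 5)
Your proposal is correct and follows essentially the same route as the paper: both endpoint bounds ($L^1_\mathcal{K}\to L^{1/(1-\alpha)}_\mathcal{K}$ via Lemma 2.2 and the cancellation of the $|Q|$-factor, and $L^{1/\alpha}_\mathcal{K}\to L^\infty_\mathcal{K}$ via H\"older) are exactly the paper's computations, followed by the same application of Theorem 3.1. Your explicit attention to the side conditions of Theorem 3.1 and to the fact that the endpoints $p=1$ and $p=1/\alpha$ are covered directly rather than by interpolation is a small but welcome addition of care over the paper's presentation.
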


\begin{proof}
We begin with $p=1,\ q=1/(1-\alpha)$. By Lemma 2.2,
\begin{align*}
\lVert A_{Q,\alpha}F\rVert_{L_{\mathcal{K}}^{\frac{1}{1-\alpha}}(\mathbb{R}^n,|\cdot|)} &=\left(\int_{\mathbb{R}^n}\left|\frac{1}{m_n(Q)^{1-\alpha}}\int_QF(y)dy\cdot\chi_Q(x)\right|^{\frac{1}{1-\alpha}}dx\right)^{1-\alpha}\\
&\leq\left(\frac{1}{m_n(Q)}\int_{Q}\left(\int_Q|F(y)|dy\right)^{\frac{1}{1-\alpha}}dx\right)^{1-\alpha}\leq\lVert F\rVert_{L_{\mathcal{K}}^{1}(\mathbb{R}^n,|\cdot|)}.
\end{align*}

Then, consider the case $p=1/\alpha$, $q=\infty$. By Lemma 2.2 and H\"{o}lder's inequality, for all $x\in\mathbb{R}^n$, we have
\begin{align*}
|A_{Q,\alpha}F(x)|&=\left|\frac{1}{m_n(Q)^{1-\alpha}}\int_QF(y)dy\cdot\chi_Q(x)\right|\leq\frac{1}{m_n(Q)^{1-\alpha}}\int_Q|F(y)|dy\\
&\leq\left(\int_Q|F(y)|^{\frac{1}{\alpha}}dy\right)^{\alpha}\leq\lVert F\rVert_{L_{\mathcal{K}}^{\frac{1}{\alpha}}(\mathbb{R}^n,|\cdot|)},
\end{align*}
which implies that
$$\lVert A_{Q,\alpha}F\rVert_{L_{\mathcal{K}}^{\infty}(\mathbb{R}^n,|\cdot|)}\leq\lVert F\rVert_{L_{\mathcal{K}}^{\frac{1}{\alpha}}(\mathbb{R}^n,|\cdot|)}.$$

Finally, by Theorem 3.1 with $T=A_{Q,\alpha}$, $\Omega=\mathbb{R}^n$, $\rho(\cdot)=|\cdot|$, $p_0=1/\alpha$, $p_1=1$, $q_0=\infty$ and $q_1=1/(1-\alpha)$, we finish the proof.
\end{proof}

\begin{definition}
Given a locally integrably bounded function $F:\mathbb{R}^n\rightarrow\mathcal{K}_{bcs}(\mathbb{R}^d)$ and $\alpha\in(0,1)$, define the fractional maximal operator $M_{\alpha}$ by
$$M_{\alpha}F(x)=\overline{\mathrm{conv}}\left(\bigcup_QA_{Q,\alpha}F(x)\right),$$
where the union is taken over all cubes $Q$ whose sides are parallel to the coordinate axes.
\end{definition}

\begin{lemma}
The fractional maximal operator is sublinear: if $F,G:\mathbb{R}^n\rightarrow\mathcal{K}_{bcs}(\mathbb{R}^d)$ are locally integrably bounded mappings, $\lambda\in\mathbb{R}$, then for all $x\in\mathbb{R}^n$,
$$M_{\alpha}(F+G)(x)\subset M_{\alpha}F(x)+M_{\alpha}G(x),\ \ M_{\alpha}(\lambda F)(x)=\lambda M_{\alpha}F(x).$$
\end{lemma}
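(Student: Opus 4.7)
The plan is to reduce everything to the linearity of the fractional averaging operator $A_{Q,\alpha}$ already established in Lemma 3.1, together with standard properties of the closed convex hull operation.

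First I would address the sublinearity inclusion. By Lemma 3.1, for every cube $Q$ with sides parallel to the axes and every $x\in\mathbb{R}^n$,
$$A_{Q,\alpha}(F+G)(x)=A_{Q,\alpha}F(x)+A_{Q,\alpha}G(x).$$
Since $A_{Q,\alpha}F(x)\subset M_\alpha F(x)$ and $A_{Q,\alpha}G(x)\subset M_\alpha G(x)$ hold directly from the definition of $M_\alpha$, this gives the pointwise inclusion $A_{Q,\alpha}(F+G)(x)\subset M_\alpha F(x)+M_\alpha G(x)$ for every $Q$. Taking the union over all admissible cubes yields $\bigcup_Q A_{Q,\alpha}(F+G)(x)\subset M_\alpha F(x)+M_\alpha G(x)$. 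The right-hand side is convex as the Minkowski sum of two convex sets, and it is closed as the sum of two bounded closed (hence compact) convex subsets of $\mathbb{R}^d$; therefore it must contain the closed convex hull of the left-hand side, which is exactly $M_\alpha(F+G)(x)$.

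For the scalar identity I would apply Lemma 3.1 once more to obtain $A_{Q,\alpha}(\lambda F)(x)=\lambda A_{Q,\alpha}F(x)$, so that
$$\bigcup_Q A_{Q,\alpha}(\lambda F)(x)=\lambda\bigcup_Q A_{Q,\alpha}F(x).$$
Then I would use the elementary fact that the closed convex hull commutes with scalar multiplication, i.e.\ $\overline{\mathrm{conv}}(\lambda E)=\lambda\,\overline{\mathrm{conv}}(E)$ for every $E\subset\mathbb{R}^d$ and every $\lambda\in\mathbb{R}$ (trivial for $\lambda=0$, and for $\lambda\neq 0$ it follows from the definition of the convex hull together with the continuity of the map $v\mapsto\lambda v$). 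This immediately gives $M_\alpha(\lambda F)(x)=\lambda M_\alpha F(x)$.

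The one non-routine point, and the step I expect to need most care, is the closedness of $M_\alpha F(x)+M_\alpha G(x)$: without it, the closed convex hull on the left could in principle escape from the Minkowski sum on the right. This is why I would argue via the fact that each $M_\alpha F(x)$ is a bounded, closed, convex, symmetric subset of $\mathbb{R}^d$, making the sum compact and hence closed. Should $M_\alpha F(x)$ or $M_\alpha G(x)$ happen to be unbounded at some exceptional $x$, the same argument works after replacing the Minkowski sum by its closure, since the desired inclusion only requires some closed convex set containing the union $\bigcup_Q A_{Q,\alpha}(F+G)(x)$.
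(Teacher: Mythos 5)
Your proof is correct and follows essentially the same route as the paper's: both reduce everything to the linearity of $A_{Q,\alpha}$ from Lemma 3.1 and then pass to closed convex hulls of the unions over cubes. You are in fact a bit more careful than the paper, which simply invokes ``the linearity of the closure of the convex hull'' where you explicitly justify why the Minkowski sum $M_\alpha F(x)+M_\alpha G(x)$ is closed and convex and hence absorbs $\overline{\mathrm{conv}}\bigl(\bigcup_Q A_{Q,\alpha}(F+G)(x)\bigr)$.
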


\begin{proof}
By Lemma 3.1 and the linearity of the closure of convex hull,
\begin{align*}
M_{\alpha}(F+G)(x)&=\overline{\mathrm{conv}}\left(\bigcup_QA_{Q,\alpha}(F(x)+G(x))\right)\subset\overline{\mathrm{conv}}\left(\bigcup_QA_{Q,\alpha}F(x)+\bigcup_QA_{Q,\alpha}G(x)\right)\\
&=\overline{\mathrm{conv}}\left(\bigcup_QA_{Q,\alpha}F(x)\right)+\overline{\mathrm{conv}}\left(\bigcup_QA_{Q,\alpha}G(x)\right)=M_{\alpha}F(x)+M_{\alpha}G(x),
\end{align*}
and
\begin{align*}
M_{\alpha}(\lambda F)(x)&=\overline{\mathrm{conv}}\left(\bigcup_QA_{Q,\alpha}(\lambda F(x))\right)=\lambda\overline{\mathrm{conv}}\left(\bigcup_QA_{Q,\alpha}(F(x))\right)=\lambda M_{\alpha}(F)(x),
\end{align*}
which show the sublinearity.
\end{proof}

To prove the boundedness of fractional maximal operators, we need the following results.

\begin{definition}\cite{Cru2016}
For $\tau\in\{0,\pm1/3\}^n$, define the translated dyadic grid
$$\mathcal{D}^{\tau}=\{2^k([0,1)^n+m+(-1)^k\tau):k\in\mathbb{Z},m\in\mathbb{Z}^n\},$$
and define the generalized dyadic fractional maximal operator
$$M^{\tau}_{\alpha}F(x)=\overline{\mathrm{conv}}\left(\bigcup_{Q\in\mathcal{D}^{\tau}}A_{Q,\alpha}F(x)\right),$$
which generalizes the standard dyadic grid $\mathcal{D}=\mathcal{D}^{0,0,\cdots,0}$, and dyadic fractional maximal operator
$$M^{d}_{\alpha}F(x)=M^{0,0,\cdots,0}_{\alpha}F(x).$$
\end{definition}

\begin{definition}\cite{Mar2022}
Given a locally integrably bounded function $F:\mathbb{R}^n\rightarrow\mathcal{K}_{bcs}(\mathbb{R}^d)$ and $\alpha\in(0,1)$, define
$$\widehat{M}_{\alpha}^dF(x)=\overline{\bigcup_{Q\in\mathcal{D}}A_{Q,\alpha}F(x)}.$$
\end{definition}

Similar to \cite[Lemma 5.9]{Mar2022}, the following lemma can be proved, we omit the details here.
\begin{lemma}
Given a locally integrably bounded, convex-set valued function $F:\mathbb{R}^n\rightarrow\mathcal{K}_{bcs}(\mathbb{R}^d)$, we have
$$M_{\alpha}F(x)\subset C\sum_{\tau\in\{0,\pm1/3\}^n}M^{\tau}_{\alpha}F(x),$$
where the constant $C$ depends only on dimension $n$ and $\alpha$.
\end{lemma}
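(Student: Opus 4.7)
The plan is to follow the proof strategy of \cite[Lemma 5.9]{Mar2022}, adapted to the fractional setting by replacing the factor $|Q|^{-1}$ with $|Q|^{-(1-\alpha)}$ throughout. The driving tool is the classical one-third trick (three lattice theorem): for every cube $Q\subset\mathbb{R}^n$ there exist $\tau\in\{0,\pm 1/3\}^n$ and a cube $Q^{\tau}\in\mathcal{D}^{\tau}$ with $Q\subset Q^{\tau}$ and $\ell(Q^{\tau})\leq c\,\ell(Q)$, hence $|Q^{\tau}|\leq c_n|Q|$ for a dimensional constant $c_n$.

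The first step is to establish the pointwise set inclusion $A_{Q,\alpha}F(x)\subset c_n^{1-\alpha}A_{Q^{\tau},\alpha}F(x)$ for every $x\in\mathbb{R}^n$. Since each $F(y)\in\mathcal{K}_{bcs}(\mathbb{R}^d)$ is convex and symmetric, it contains the origin, so the zero function is a measurable integrable selection of $F$ restricted to $Q^{\tau}\setminus Q$ and thus $0\in\int_{Q^{\tau}\setminus Q}F(y)\,dy$. Linearity of the Aumann integral (Lemma 2.1) then yields
$$\int_Q F(y)\,dy=\int_Q F(y)\,dy+\{0\}\subset\int_Q F(y)\,dy+\int_{Q^{\tau}\setminus Q}F(y)\,dy=\int_{Q^{\tau}}F(y)\,dy.$$
Combining this with the volume comparison $|Q^{\tau}|^{1-\alpha}\leq c_n^{1-\alpha}|Q|^{1-\alpha}$ and the trivial bound $\chi_Q(x)\leq\chi_{Q^{\tau}}(x)$ delivers the claimed inclusion, and consequently $A_{Q,\alpha}F(x)\subset c_n^{1-\alpha}M^{\tau}_{\alpha}F(x)$.

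Next I would take the union over all cubes $Q$ (each paired with its associated $Q^{\tau}\in\mathcal{D}^{\tau}$) and pass to the closed convex hull to obtain
$$M_{\alpha}F(x)=\overline{\mathrm{conv}}\Bigl(\bigcup_{Q}A_{Q,\alpha}F(x)\Bigr)\subset c_n^{1-\alpha}\,\overline{\mathrm{conv}}\Bigl(\bigcup_{\tau\in\{0,\pm 1/3\}^n}M^{\tau}_{\alpha}F(x)\Bigr).$$
Since each $M^{\tau}_{\alpha}F(x)$ is closed, convex, symmetric and contains the origin, the closed convex hull of their finite union is absorbed, at the cost of an extra factor of at most $3^n$, into the Minkowski sum $\sum_{\tau}M^{\tau}_{\alpha}F(x)$, producing the stated bound with $C$ depending only on $n$ and $\alpha$.

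The main obstacle is the very last step, where the purely set-valued framework requires a careful comparison between $\overline{\mathrm{conv}}(\bigcup_\tau M^{\tau}_{\alpha}F(x))$ and $\sum_\tau M^{\tau}_{\alpha}F(x)$. The mechanism, mirroring \cite{Mar2022}, is that any $v\in\mathrm{conv}\bigl(\bigcup_{\tau}M^{\tau}_{\alpha}F(x)\bigr)$ decomposes as $v=\sum_{\tau}\lambda_\tau v_\tau$ with $v_\tau\in M^{\tau}_{\alpha}F(x)$ and $\sum_\tau\lambda_\tau=1$, and since $0\in M^{\tau}_{\alpha}F(x)$ together with convexity gives $\lambda_\tau v_\tau=\lambda_\tau v_\tau+(1-\lambda_\tau)\cdot 0\in M^{\tau}_{\alpha}F(x)$, each summand lives in its own $M^{\tau}_{\alpha}F(x)$, so the full sum lies in $\sum_\tau M^{\tau}_{\alpha}F(x)$; the passage through the closure is the technical point and is handled exactly as in the non-fractional case of \cite{Mar2022}.
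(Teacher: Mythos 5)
Your argument is exactly the one the paper has in mind: the paper omits the proof of this lemma, deferring to \cite[Lemma 5.9]{Mar2022}, and your adaptation (one-third trick, the inclusion $\int_Q F\,dy\subset\int_{Q^{\tau}}F\,dy$ via $0\in F(y)$, the volume factor $c_n^{1-\alpha}$, and the absorption of the convex hull of the union into the Minkowski sum using $0\in M^{\tau}_{\alpha}F(x)$) is the correct fractional analogue. The only cosmetic point is that no extra factor of $3^n$ is needed in the last step, since $\mathrm{conv}(\bigcup_\tau K_\tau)\subset\sum_\tau K_\tau$ already holds when each $K_\tau$ is convex and contains the origin, as your own decomposition $v=\sum_\tau\lambda_\tau v_\tau$ shows.
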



\begin{theorem}
For $0<\alpha<1$, $1<p<q\leq\infty$ such that $1/p-1/q=\alpha$, the fractional maximal operator $M_{\alpha}:L^p_\mathcal{K}(\mathbb{R}^n,|\cdot|)\rightarrow L^q_\mathcal{K}(\mathbb{R}^n,|\cdot|)$ is bounded. For $0<\alpha<1$, $p=1$, $q=1/(1-\alpha)$, $M_{\alpha}:L^1_\mathcal{K}(\mathbb{R}^n,|\cdot|)\rightarrow L^{1/(1-\alpha),\infty}_\mathcal{K}(\mathbb{R}^n,|\cdot|)$ is bounded.
\end{theorem}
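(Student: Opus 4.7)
\medskip
\noindent\textbf{Proof proposal.} The plan is to establish the two endpoints of $M_\alpha$ directly and then invoke the Marcinkiewicz interpolation theorem (Theorem 3.1, case $(c_1)$), using the sublinearity of $M_\alpha$ guaranteed by Lemma 3.3.

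For the strong endpoint at $p_0=1/\alpha$, $q_0=\infty$: for every cube $Q$ and every $x\in\mathbb{R}^n$, Lemma 2.2 together with H\"older's inequality gives
$$|A_{Q,\alpha}F(x)|\leq \frac{1}{|Q|^{1-\alpha}}\int_Q |F(y)|\,dy\leq \left(\int_Q |F(y)|^{1/\alpha}\,dy\right)^{\!\alpha}\leq \lVert F\rVert_{L_{\mathcal{K}}^{1/\alpha}(\mathbb{R}^n,|\cdot|)},$$
which is exactly the $p=1/\alpha$ computation from the proof of Theorem 3.2. Since $M_\alpha F(x)$ is the closed convex hull of such sets---each of whose elements has norm at most $\lVert F\rVert_{L_{\mathcal{K}}^{1/\alpha}}$---the same bound passes to $|M_\alpha F(x)|$, yielding $\lVert M_\alpha F\rVert_{L_{\mathcal{K}}^{\infty}(\mathbb{R}^n,|\cdot|)}\leq \lVert F\rVert_{L_{\mathcal{K}}^{1/\alpha}(\mathbb{R}^n,|\cdot|)}$.

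For the weak endpoint at $p_1=1$, $q_1=1/(1-\alpha)$: by Lemma 3.4 it suffices to prove the weak bound for each dyadic variant $M_\alpha^\tau$. Fix $\tau\in\{0,\pm 1/3\}^n$ and $\lambda>0$. Lemma 2.2 yields the pointwise dominant
$$|M_\alpha^\tau F(x)|\leq \sup_{Q\in\mathcal{D}^\tau,\,Q\ni x}\frac{1}{|Q|^{1-\alpha}}\int_Q |F(y)|\,dy,$$
so the level set $\{x:|M_\alpha^\tau F(x)|>\lambda\}$ is contained in the union of the maximal cubes $Q_j\in\mathcal{D}^\tau$ satisfying $|Q_j|^{\alpha-1}\int_{Q_j}|F(y)|\,dy>\lambda$. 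Maximality gives $|Q_j|<\bigl(\lambda^{-1}\int_{Q_j}|F|\bigr)^{1/(1-\alpha)}$, and since $1/(1-\alpha)\geq 1$ the power sum inequality $\sum_j b_j^{1/(1-\alpha)}\leq (\sum_j b_j)^{1/(1-\alpha)}$ together with the disjointness of the $Q_j$ produces
$$\sum_j |Q_j|\leq \lambda^{-1/(1-\alpha)}\sum_j\Bigl(\int_{Q_j}|F|\Bigr)^{\!1/(1-\alpha)}\leq \lambda^{-1/(1-\alpha)}\Bigl(\sum_j\int_{Q_j}|F|\Bigr)^{\!1/(1-\alpha)}\leq \bigl(\lVert F\rVert_{L_{\mathcal{K}}^1}/\lambda\bigr)^{\!1/(1-\alpha)}.$$
This is precisely $\lVert M_\alpha^\tau F\rVert_{L_{\mathcal{K}}^{1/(1-\alpha),\infty}}\leq \lVert F\rVert_{L_{\mathcal{K}}^1}$, and Lemma 3.4 transfers it to $M_\alpha$ up to a dimensional constant.

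Finally, apply Theorem 3.1 to $M_\alpha$ with $(p_0,q_0)=(1/\alpha,\infty)$ and $(p_1,q_1)=(1,1/(1-\alpha))$, which is exactly case $(c_1)$ of that theorem. A direct check of the relations $1/p=(1-t)\alpha+t$, $1/q=t(1-\alpha)$ shows that the parameter $t\in(0,1)$ sweeps out precisely the range $1<p<1/\alpha$ with $1/p-1/q=\alpha$; combined with the $L^{1/\alpha}\to L^\infty$ endpoint, this exhausts the full statement. The main obstacle is the weak-type $(1,1/(1-\alpha))$ bound, where one must lift the standard Calder\'on--Zygmund stopping-time decomposition to the convex-set valued setting; the step that makes this routine rather than delicate is the observation from Lemma 2.2 that $|A_{Q,\alpha}F(x)|$ is pointwise dominated by the scalar integral average of $|F|$, so the stopping-time cubes and counting proceed exactly as in the scalar case.
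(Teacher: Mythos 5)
Your proposal is correct and follows essentially the same route as the paper: reduce to the dyadic variants via the one-third-trick lemma, prove the strong $(1/\alpha,\infty)$ endpoint from the pointwise bound $|A_{Q,\alpha}F(x)|\leq\lVert F\rVert_{L^{1/\alpha}_{\mathcal K}}$, prove the weak $(1,1/(1-\alpha))$ endpoint by the Calder\'on--Zygmund stopping-time selection of maximal dyadic cubes with the power-sum inequality, and then apply Theorem 3.1 with $(p_0,q_0)=(1/\alpha,\infty)$, $(p_1,q_1)=(1,1/(1-\alpha))$. The only discrepancies are cosmetic: your lemma numbers are shifted by one (sublinearity is Lemma 3.2 and the dyadic reduction is Lemma 3.3), and the paper runs the endpoint estimates for the non-convexified operator $\widehat{M}^d_{\alpha}$ rather than for $M^{\tau}_{\alpha}$ directly, which changes nothing of substance.
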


\begin{proof}
First, by Lemma 3.3,
$$|M_{\alpha}F(x)|\leq C\sum_{\tau\in\{0,\pm1/3\}^n}|M_{\alpha}^{\tau}F(x)|,$$
and so it will suffice to prove the strong and weak type inequalities for $M_{\alpha}^{\tau}$. In fact, given that all of the dyadic grids $\mathcal{D}^{\tau}$ have the same properties as $\mathcal{D}$, it will suffice to prove them for the dyadic fractional maximal operator $M_{\alpha}^d$. Moreover, arguing as the authors did in the proof of\cite[Lemma 5.6]{Mar2022}, it will suffice to prove our estimates for the operator $\widehat{M}^d_{\alpha}$.

We begin with $p=1$, $q=1/(1-\alpha)$ by adapting the Calder\'{o}n-Zygmund decomposition to convex-set valued functions. Fix $\lambda>0$ and define
$$\Omega_{\lambda}^d=\{x\in\mathbb{R}^n:|\widehat{M}_{\alpha}^dF(x)|>\lambda\}.$$
If ${\Omega_{\lambda}^d}$ is empty, there is nothing to prove. Otherwise, given $x\in\Omega_{\lambda}^d$, there must exist a cube $Q\in\mathcal{D}$ such that $x\in Q$, and
$$\frac{1}{m_n(Q)^{1-\alpha}}\left|\int_{Q}F(y)dy\right|>\lambda.$$
We claim that among all the dyadic cubes containing $x$, there must be a largest one with this property. By Lemma 2.2,
$$\frac{1}{m_n(Q)^{1-\alpha}}\left|\int_{Q}F(y)dy\right|\leq\frac{1}{m_n(Q)^{1-\alpha}}\lVert f\rVert_{L^1_\mathcal{K}(\mathbb{R}^n,|\cdot|)}.$$
Since the right-hand side goes to $0$ as $m_n(Q)\rightarrow\infty$, we see that such a maximal cube must exist, denote this cube by $Q_x$. Since the set of dyadic cubes is countable, we can enumerate the set $\{Q_x:x\in\Omega_{\lambda}^d\}$ by $\{Q_j\}_{j\in\mathbb{N}}$. The cubes $\{Q_j\}$ must be disjoint, since if one were contained in the other, it would contradict the maximality. By the choice of these cubes, $\Omega_{\lambda}^d\subset\bigcup_jQ_j$, hence
\begin{align*}
m_n(\Omega_{\lambda}^d)&\leq\sum_{j}m_n(Q_j)\leq\frac{1}{\lambda^{\frac{1}{1-\alpha}}}\sum_{j}m_n(Q_j)\left|\frac{1}{m_n(Q_j)^{1-\alpha}}\int_{Q_j}F(y)dy\right|^{\frac{1}{1-\alpha}}\leq\left(\frac{1}{\lambda}\int_{\mathbb{R}^n}|F(x)|dx\right)^{\frac{1}{1-\alpha}}.
\end{align*}

Then we consider the case $p=1/\alpha$, $q=\infty$. Since $F\in L^{1/\alpha}_\mathcal{K}(\mathbb{R}^n,|\cdot|)$, by the $(1/\alpha,\infty)$ boundedness of $A_{Q,\alpha}$, for a.e. $x\in\Omega$, we have
$$|A_{Q,\alpha}F(x)|\leq C\lVert F\rVert_{L^{\frac{1}{\alpha}}_\mathcal{K}(\mathbb{R}^n,|\cdot|)},$$
which implies that
$$\lVert\widehat{M}^d_{\alpha}F\rVert_{L^{\infty}_\mathcal{K}(\mathbb{R}^n,|\cdot|)}\leq C\lVert F\rVert_{L^{\frac{1}{\alpha}}_\mathcal{K}(\mathbb{R}^n,|\cdot|)}.$$

Finally, by Theorem 3.1 with $T=\widehat{M}^d_{\alpha}$, $\Omega=\mathbb{R}^n$, $\rho(\cdot)=|\cdot|$, $p_0=1/\alpha$, $p_1=1$, $q_0=\infty$ and $q_1=1/(1-\alpha)$, we finish the proof.
\end{proof}

\section{Riesz-Thorin interpolation theorem and its application}

Complex method is often important in operator theory, see \cite{Bao2023, Chu2023}. In this section, we first prove Riesz-Thorin interpolation theorem, which needs some results of density, analytic functions and dual norms.

\begin{definition}\cite{Cas1977, Mar2022}
Given a norm $\rho_x$ on $\mathbb{R}^d$ and two compact sets $K_1,K_2\subset\mathbb{R}^d$, the corresponding Hausdorff distance function is defined by
$$d_{H,x}(K_1,K_2)=\max\{\sup_{v\in K_1}\inf_{w\in K_2}\rho_x(v-w),\sup_{v\in K_2}\inf_{w\in K_1}\rho_x(v-w)\}.$$
Hence, define the Hausdorff distance on $\mathcal{K}_{bcs}(\mathbb{R}^d)$ as
$$d_p(F,G)=\left(\int_{\Omega}d_{H,x}(F(x),G(x))^pd\mu(x)\right)^{\frac{1}{p}}.$$
\end{definition}

\begin{definition}\cite{Mar2022}
A measurable function $F:\Omega\rightarrow\mathcal{K}_{bcs}(\mathbb{R}^d)$ is called a simple function if $F$ takes only finitely many values $S_1,S_2,\cdots,S_m\in\mathcal{K}_{bcs}(\mathbb{R}^d)$, hence, it can be written in the form
$$F(x)=\sum_{k=1}^m\chi_{A_k}(x)S_k\ \ for\ x\in\Omega,$$
where $A_1,A_2,\cdots,A_m$ are disjoint measurable sets such that
$$\bigcup_{k=1}^mA_k=\Omega.$$
\end{definition}

\begin{lemma}\cite{Mar2022}
Every measurable function $F:\Omega\rightarrow\mathcal{K}_{bcs}(\mathbb{R}^d)$ is the pointwise limit of simple measurable functions with respect to the Hausdorff distance on $\mathcal{K}_{bcs}(\mathbb{R}^d).$
\end{lemma}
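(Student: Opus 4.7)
The plan is to carry out the classical ``approximation by simple functions'' argument from real analysis inside the metric space $(\mathcal{K}_{bcs}(\mathbb{R}^d), d_H)$, where $d_H$ denotes the Hausdorff distance generated by the Euclidean norm. First I would verify that this space is separable. A convenient countable dense family is $\mathcal{S} = \{\overline{\mathrm{conv}}(\{\pm q_1,\ldots,\pm q_m\}) : m \in \mathbb{N},\ q_i \in \mathbb{Q}^d\}$, i.e., symmetric polytopes with rational vertices; any $S \in \mathcal{K}_{bcs}(\mathbb{R}^d)$ can be approximated in Hausdorff distance by such polytopes via standard convex-geometry estimates (cover $S$ with $\varepsilon$-nets of rational points, take the symmetric convex hull).

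Enumerate $\mathcal{S} = \{S_k\}_{k \in \mathbb{N}}$. For each $n \in \mathbb{N}$ I would form the disjoint partition
$$E_{n,k} = \{x \in \Omega : d_H(F(x), S_k) < 1/n\} \setminus \bigcup_{j<k} \{x \in \Omega : d_H(F(x), S_j) < 1/n\},$$
and set $F_n(x) = S_k$ for $x \in E_{n,k}$. If each $E_{n,k}$ lies in $\mathcal{A}$, then $F_n$ is simple and measurable with values in $\mathcal{K}_{bcs}(\mathbb{R}^d)$, and density of $\mathcal{S}$ forces $\bigcup_k E_{n,k} = \Omega$, giving $d_H(F_n(x), F(x)) < 1/n$ for every $x \in \Omega$; hence $F_n(x) \to F(x)$ pointwise in the Hausdorff metric.

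The main obstacle is showing that $x \mapsto d_H(F(x), S)$ is measurable for each fixed $S \in \mathcal{K}_{bcs}(\mathbb{R}^d)$, since this distance is defined through nested sup/inf operations over uncountable sets. The key tool is Castaing's representation, which follows from the measurability hypothesis $F^{-1}(U) \in \mathcal{A}$ for every open $U \subset \mathbb{R}^d$ (see \cite{Aub2009}): there exist selectors $\{f_j\}_{j \in \mathbb{N}} \in S^0(\Omega, F)$ such that $F(x) = \overline{\{f_j(x) : j \in \mathbb{N}\}}$ for every $x \in \Omega$. One may then replace
$$\sup_{v \in F(x)} \inf_{w \in S} |v-w| = \sup_{j \in \mathbb{N}} \inf_{w \in S} |f_j(x) - w|,$$
a countable supremum of the measurable distance functions $x \mapsto \mathrm{dist}(f_j(x), S)$, hence measurable. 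The dual quantity $\sup_{w \in S} \inf_{v \in F(x)} |v - w|$ is handled by choosing a countable dense subset $\{w_\ell\}$ of $S$ and writing it as $\sup_\ell \inf_j |f_j(x) - w_\ell|$, again a countable sup of measurable functions. Taking the maximum gives measurability of $x \mapsto d_H(F(x), S)$, whence each $E_{n,k}$ belongs to $\mathcal{A}$ and the construction above delivers the required sequence $\{F_n\}$.
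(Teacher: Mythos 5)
The paper does not prove this lemma itself (it is quoted from \cite{Mar2022}), so I am comparing your argument against the construction that the paper actually relies on downstream. Your overall strategy --- separability of $(\mathcal{K}_{bcs}(\mathbb{R}^d),d_H)$ via rational symmetric polytopes, a partition of $\Omega$ by proximity to a countable dense family, and a Castaing representation $\{f_j\}$ to reduce the nested sup/inf in $d_H(F(\cdot),S)$ to countable suprema of measurable functions --- is sound, and the measurability step is handled correctly (continuity of $\mathrm{dist}(\cdot,S)$ plus density of $\{f_j(x)\}$ in $F(x)$ justifies both exchanges).

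There is, however, one concrete defect: the function $F_n$ you build takes the value $S_k$ on $E_{n,k}$ for \emph{every} $k\in\mathbb{N}$, so it is countably-valued, not finitely-valued, and therefore is not a simple function in the sense of Definition 4.2 of the paper. The fix is standard: for each $n$ restrict attention to $S_1,\dots,S_n$ and set $F_n(x)=S_{j_n(x)}$ where $j_n(x)$ is the smallest index $j\le n$ minimizing $d_H(F(x),S_j)$ over $j\le n$; this is finitely-valued, measurable by the measurability of each $x\mapsto d_H(F(x),S_j)$, and $d_H(F_n(x),F(x))=\min_{j\le n}d_H(F(x),S_j)\to 0$ by density. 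You should also be aware that your approximation differs in kind from the one in \cite{Mar2022} that the paper invokes in the proof of Lemma 4.2: there the simple functions satisfy $F_1(x)\subset F_2(x)\subset\cdots\subset F(x)$ (an increasing inner approximation built from a Castaing representation), and that containment is exactly what licenses the domination $d_{H,x}(F_n(x),F(x))\le d_{H,x}(\{0\},F(x))$ used later. Your $S_k$ need not be subsets of $F(x)$, so while your argument does prove the lemma as stated, it would not substitute directly into the paper's proof of Lemma 4.2 without a separate bound on $d_{H,x}(F_n(x),F(x))$.
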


\begin{lemma}
For any $p>0$, $F\in L_{\mathcal{K}}^p(\Omega,\rho)$ and $\epsilon>0$, there exists a simple measurable function $G$ such as $d_p(F,G)<\epsilon$.
\end{lemma}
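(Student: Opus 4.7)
The plan is to combine Lemma 4.1 (which gives pointwise Hausdorff-distance approximation of any measurable $F$ by simple functions) with a truncation on $\Omega$ and Egorov's theorem, in order to upgrade the purely pointwise statement of Lemma 4.1 into an $L^p$-convergence statement for the distance $d_p$. The two things that stand in the way are that $\Omega$ may have infinite measure and that the approximants furnished by Lemma 4.1 come with no uniform control on $\rho_x(H_n(x))$, so there is no built-in dominated-convergence theorem to apply directly.

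First I would reduce to the case where $F$ is supported on a set of finite measure and $\rho_x(F(x))$ is uniformly bounded. Since the scalar function $h(x):=\rho_x(F(x))$ lies in $L^p(\Omega)$ and $\Omega$ is $\sigma$-finite, I can choose a measurable $E\subset\Omega$ with $\mu(E)<\infty$ and a constant $N>0$ such that $\int_{\Omega\setminus E_N}h^p\,d\mu$ is as small as needed, where $E_N:=E\cap\{x:h(x)\leq N\}$. Set $F'(x):=F(x)\chi_{E_N}(x)$ and $F'(x):=\{0\}$ off $E_N$. Because every $F(x)\in\mathcal{K}_{bcs}(\mathbb{R}^d)$ is convex and symmetric, $0\in F(x)$, hence $d_{H,x}(F(x),\{0\})=\rho_x(F(x))$, giving
$$d_p(F,F')^p=\int_{\Omega\setminus E_N}h(x)^p\,d\mu(x),$$
which is controlled by the choice of $E$ and $N$.

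Next, apply Lemma 4.1 to $F'$ to obtain simple measurable functions $H_n$ with $d_{H,x}(F'(x),H_n(x))\to 0$ pointwise on $\Omega$ (since all norms on $\mathbb{R}^d$ are equivalent, the version of Lemma 4.1 used is insensitive to whether the Hausdorff distance is taken with respect to $\rho_x$ or $|\cdot|$). As $\mu(E_N)<\infty$, Egorov's theorem yields, for any preassigned $\eta>0$, a measurable $E_\eta\subset E_N$ with $\mu(E_N\setminus E_\eta)<\eta$ on which the convergence is uniform. Define
$$G(x):=H_n(x)\chi_{E_\eta}(x)+\{0\}\chi_{\Omega\setminus E_\eta}(x),$$
which is still a simple measurable function because multiplication by $\chi_{E_\eta}$ only refines the finite partition underlying $H_n$. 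Splitting $d_p(F',G)^p$ over $E_\eta$, $E_N\setminus E_\eta$, and $\Omega\setminus E_N$ and using $\rho_x(F'(x))\leq N$ on $E_N$ produces the estimate
$$d_p(F',G)^p\leq\Bigl(\sup_{x\in E_\eta}d_{H,x}(F'(x),H_n(x))\Bigr)^p\mu(E_\eta)+N^p\eta,$$
which I make as small as I please by first choosing $\eta$ small (to kill the second term) and then $n$ large (to kill the first, via uniform convergence). Combining via $d_{H,x}(F,G)\leq d_{H,x}(F,F')+d_{H,x}(F',G)$ and either Minkowski's inequality (for $p\geq 1$) or the subadditivity $(a+b)^p\leq a^p+b^p$ (for $0<p<1$) closes the estimate $d_p(F,G)<\epsilon$.

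The main obstacle is the passage from pointwise Hausdorff-distance convergence to $L^p$-convergence in the absence of an \emph{a priori} dominating function; the truncation to $E_N$ (bounded measure, bounded $h$) followed by the Egorov step is what supplies the missing domination. The only delicate verification is that the Egorov cutoff $G=H_n\chi_{E_\eta}$ remains simple and measurable, which is immediate since it takes only finitely many values, realised on the measurable atoms obtained by intersecting the level sets of $H_n$ with $E_\eta$ (and the value $\{0\}$ on the complement).
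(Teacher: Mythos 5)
Your proof is correct, but it takes a genuinely different route from the paper's. The paper uses neither truncation nor Egorov: it reaches into the proof of \cite[Lemma 4.4]{Mar2022} and extracts the extra information that the simple approximants can be taken nested, $F_1(x)\subset F_2(x)\subset\cdots\subset F(x)$. Since each $F_n(x)$ is nonempty, convex and symmetric, $0\in F_n(x)$, so $d_{H,x}(F_n(x),F(x))=\sup_{v\in F(x)}\inf_{w\in F_n(x)}\rho_x(v-w)\leq\rho_x(F(x))$, and $x\mapsto\rho_x(F(x))\in L^p(\Omega)$ is precisely the dominating function you correctly observed is missing from the bare statement of Lemma 4.1; dominated convergence then gives $d_p(F_n,F)\to0$ immediately. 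Your argument works only from the stated conclusion of Lemma 4.1 (pointwise Hausdorff convergence, no containment) and supplies the missing domination by the classical truncation-plus-Egorov scheme: the reduction to a finite-measure set on which $\rho_x(F(x))\leq N$, the identity $d_{H,x}(F(x),\{0\})=\rho_x(F(x))$ (again via $0\in F(x)$), and the order of quantifiers (first $\eta$, then $n$) are all handled correctly, and the cutoff $H_n\chi_{E_\eta}$ is indeed still simple. What your version buys is independence from the internal structure of the cited approximation, so it would survive if Lemma 4.1 were treated as a black box; what it costs is length and one routine but unstated measurability check, namely that $x\mapsto d_{H,x}(F'(x),H_n(x))$ is measurable so that Egorov applies --- a point the paper's monotone-plus-dominated-convergence argument sidesteps entirely.
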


\begin{proof}
By the proof of\cite[Lemma 4.4]{Mar2022}, there exist simple measurable functions $\{F_n\}_{n\in\mathbb{N}}$ such that
$$F_1(x)\subset F_2(x)\subset\cdots,\ \ F(x)=\bigcup_{n\in\mathbb{N}}F_n(x)\ \ and\ \ \lim_{n\to\infty}d_{H,x}(F_n(x),F(x))=0$$
for any $x\in\Omega$. Therefore,
$$d_{H,x}(F_n(x),F(x))\leq d_{H,x}(\{0\},F(x)),$$
and
$$\left(\int_{\Omega}d_{H,x}(\{0\},F(x))^pd\mu(x)\right)^{\frac{1}{p}}=\lVert f
\rVert_{L_{\mathcal{K}}^p(\Omega,\rho)}<\infty.$$
By Lebesgue dominated covergence theorem,
\begin{align*}
\lim_{n\to\infty}d_p(F_n,F)&=\lim_{n\to\infty}\left(\int_{\Omega}d_{H,x}(F_n(x),F(x))^pd\mu(x)\right)^{\frac{1}{p}}\\
&=\left(\int_{\Omega}\lim_{n\to\infty}d_{H,x}(F_n(x),F(x))^pd\mu(x)\right)^{\frac{1}{p}}=0.
\end{align*}
Setting $G(x)=F_n(x)$ for sufficiently large $n$ finishes the proof.
\end{proof}

\begin{lemma}\cite{Rud1970}
Suppose $\mu$ is a complex measure on a measure space $X$, $\Omega$ is an open set in the plane, $\phi$ is a bounded function on $\Omega\times X$ such that $\phi(z,t)$ is a measurable function of $t$ for each $z\in\Omega$, and $\phi(z,t)$ is analytic in $\Omega$ for each $t\in X$. For $z\in\Omega$, define
$$f(z)=\int_X\phi(z,t)d\mu(t),$$
then $f$ is analytic in $\Omega$.
\end{lemma}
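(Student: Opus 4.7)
The plan is to verify the hypotheses of Morera's theorem for $f$ on $\Omega$, using Fubini's theorem to exchange a contour integral in $z$ with the integration against $\mu$ in $t$.

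First I would check that $f$ is continuous on $\Omega$. Fix $z_0 \in \Omega$ and a sequence $z_n \to z_0$. For each $t \in X$, the map $z \mapsto \phi(z,t)$ is analytic, hence continuous, so $\phi(z_n,t) \to \phi(z_0,t)$. Since $\phi$ is uniformly bounded on $\Omega \times X$ by some constant $M$ and the total variation $|\mu|$ is a finite positive measure on $X$, the dominated convergence theorem gives $f(z_n) \to f(z_0)$.

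Next, I would show that $\oint_\gamma f(z)\, dz = 0$ for every triangular contour $\gamma$ whose closed interior lies in $\Omega$. Substituting the definition of $f$ and applying Fubini's theorem, I would write
\[
\oint_\gamma f(z)\, dz = \oint_\gamma \int_X \phi(z,t)\, d\mu(t)\, dz = \int_X \oint_\gamma \phi(z,t)\, dz\, d\mu(t).
\]
For each fixed $t$, the map $z \mapsto \phi(z,t)$ is analytic on $\Omega$, and the closed interior of $\gamma$ lies in $\Omega$, so Cauchy's theorem gives that the inner contour integral vanishes. Hence $\oint_\gamma f(z)\, dz = 0$, and Morera's theorem concludes that $f$ is analytic on $\Omega$.

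The main obstacle is the legitimacy of Fubini, which requires $\phi$ to be jointly measurable on $\gamma \times X$. The hypotheses supply only separate measurability in $t$ and continuity (analyticity) in $z$, but this Carath\'eodory-type regularity together with the separability of $\gamma$ is enough to produce a jointly measurable representative; combined with the uniform bound $M$, the finite arclength on the compact $\gamma$, and the finiteness of $|\mu|$, the integrand lies in $L^1$ of the product measure and the exchange is valid. An alternative route that circumvents both Morera and the measurability subtlety is to fix $z_0 \in \Omega$, choose a closed disk $\overline{D(z_0,r)} \subset \Omega$, apply the Cauchy integral formula $\phi(z,t) = \frac{1}{2\pi i}\oint_{\partial D(z_0,r)} \frac{\phi(w,t)}{w-z}\, dw$ for $|z-z_0|<r$, swap integrals to obtain a Cauchy-type representation $f(z) = \frac{1}{2\pi i}\oint_{\partial D(z_0,r)} \frac{f(w)}{w-z}\, dw$, and deduce analyticity directly from this representation.
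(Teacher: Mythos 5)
The paper does not prove this lemma at all; it is quoted verbatim from Rudin's \emph{Real and Complex Analysis} and used as a black box in the proof of Theorem 4.1. Your Morera--Fubini argument is correct and is essentially the textbook proof of this statement: continuity of $f$ follows from dominated convergence against the finite measure $|\mu|$ using the uniform bound $M$, and the vanishing of $\oint_\gamma f(z)\,dz$ follows from Fubini plus Cauchy--Goursat applied to $z\mapsto\phi(z,t)$ for each $t$. You also correctly isolate the only genuinely delicate point, joint measurability of $\phi$ on $\gamma\times X$: since $\phi$ is measurable in $t$ for each $z$ and continuous in $z$ for each $t$, with $z$ ranging over a separable metric space, $\phi$ is a Carath\'eodory function and hence jointly measurable, so Fubini applies to the bounded integrand on the finite product measure space. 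Your alternative via the Cauchy integral representation $f(z)=\frac{1}{2\pi i}\oint_{\partial D(z_0,r)}\frac{f(w)}{w-z}\,dw$ is equally valid (and is close to Rudin's own route, which establishes differentiability directly by a bounded-convergence argument on difference quotients, the uniform bound on the quotients coming from Cauchy's estimates on a slightly larger disk). No gaps.
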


\begin{lemma}\cite{Gra2012}
Let $A$ be analytic on the open strip $S=\{z\in\mathbb{C}:0<\mathrm{Re}z<1\}$ and continuous on its closure such that
$$\sup_{z\in\bar{S}}e^{-a|\mathrm{Im}z|}\log|A(z)|\leq A<\infty$$
for some fixed $A$ and $a<\pi$. Then
$$|A(x)|\leq\exp\left\{\frac{\sin(\pi x)}{2}\int_{-\infty}^{\infty}\left[\frac{\log|A(iy)|}{\cosh(\pi y)-\cos(\pi x)}+\frac{\log|A(1+iy)|}{\cosh(\pi y)+\cos(\pi x)}\right]dy\right\}$$
whenever $0<x<1$.
\end{lemma}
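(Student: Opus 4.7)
The plan is to interpret the right-hand side as the exponential of the Poisson integral of the boundary values of the subharmonic function $u(z)=\log|A(z)|$ on the strip $S$, evaluated at the interior real point $x\in(0,1)$, and then to obtain the stated bound via a subharmonic Poisson majorization, with the growth hypothesis being exactly what is needed for a Phragm\'en--Lindel\"of argument to justify that majorization.

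First I would identify the two kernels
$$P_S^0(x,y)=\frac{\sin(\pi x)}{2(\cosh(\pi y)-\cos(\pi x))},\qquad P_S^1(x,y)=\frac{\sin(\pi x)}{2(\cosh(\pi y)+\cos(\pi x))}$$
as the Poisson kernels for $S$ corresponding to the boundary lines $\mathrm{Re}\,z=0$ and $\mathrm{Re}\,z=1$, as seen from the interior real point $x$; this is a direct computation using the conformal equivalence $w=e^{i\pi z}$ between $S$ and the upper half-plane together with the usual Poisson formula in the upper half-plane. Since $A$ is holomorphic on $S$ and continuous on $\bar S$, $u=\log|A|$ is subharmonic on $S$ (with value $-\infty$ allowed at zeros of $A$) and upper semicontinuous on $\bar S$. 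Modulo the growth issue addressed below, the Poisson representation for subharmonic functions on $S$ gives
$$u(x)\leq\int_{\mathbb{R}}\bigl[P_S^0(x,y)\,u(iy)+P_S^1(x,y)\,u(1+iy)\bigr]\,dy,$$
which on exponentiating is precisely the desired inequality.

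The hard part is justifying this majorization under the hypothesis that $u$ can grow like $A\,e^{a|\mathrm{Im}\,z|}$ rather than being bounded. The classical fix is a Phragm\'en--Lindel\"of regularization: pick $b\in(a,\pi)$ and set $A_{\varepsilon}(z)=A(z)\exp(-\varepsilon\psi(z))$, where $\psi$ is an entire function on $\bar S$ whose real part dominates $c\,e^{b|\mathrm{Im}\,z|}$ (for example, a suitably shifted version of $\cos(bz)$ arranged so that $\mathrm{Re}\,\psi\geq c\,e^{b|y|}$ throughout $\bar S$). Since $b>a$, $|A_{\varepsilon}(z)|\to 0$ as $|\mathrm{Im}\,z|\to\infty$ in $\bar S$, so $A_{\varepsilon}$ is bounded on $\bar S$ and the Poisson majorization for $\log|A_{\varepsilon}|$ holds unconditionally. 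Sending $\varepsilon\to 0^+$ then recovers the bound for $A$: the boundary integrals converge absolutely because the Poisson kernels $P_S^i(x,\cdot)$ decay like $e^{-\pi|y|}$ while $|\log|A(\cdot+iy)||$ grows at most like $e^{a|y|}$ with $a<\pi$, so dominated convergence applies. Zeros of $A$ on the boundary are handled by the standard smoothing $\log|A|\mapsto\log(|A|+\delta)$ followed by monotone convergence as $\delta\to 0^+$.
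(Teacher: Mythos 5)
Your proof is correct and follows essentially the standard argument for this lemma (Hirschman's lemma), which is exactly the route taken in the cited reference \cite{Gra2012}: the paper itself offers no proof beyond that citation. The identification of the strip Poisson kernels via $w=e^{i\pi z}$, the subharmonic majorization, and the Phragm\'en--Lindel\"of regularization with a multiplier whose real part dominates $ce^{b|\mathrm{Im}\,z|}$ for $a<b<\pi$ are all as in the standard treatment, so there is nothing to add.
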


\begin{definition}\cite{Mar2022}
Given a seminorm $p$, define $p^{\ast}:\mathbb{R}^d\rightarrow[0,\infty)$ by
\begin{equation}
p^{\ast}(v)=\sup_{w\in\mathbb{R}^d,p(w)\leq 1}|\langle v,w\rangle|.
\end{equation}
\end{definition}

\begin{definition}\cite{Mar2022}
If $\rho:\Omega\times\mathbb{R}^d\rightarrow[0,\infty)$ is a norm function, then $\rho^{\ast}:\Omega\times\mathbb{R}^d\rightarrow[0,\infty)$, defined by $\rho_x^{\ast}(v)=(\rho_x)^{\ast}(v)$, is a measurable norm function.
\end{definition}

\begin{definition}\cite{Bha2007, Mar2022}
Let $p_0$, $p_1$ be two norms, $\rho_0$, $\rho_1$ be two norm functions. For $0<t<1$, define their weighted geometric mean by
$$p_t(v)=p_0(v)^{1-t}p_1(v)^t,\ \ \rho_t(x,v)=\rho_0(x,v)^{1-t}\rho_1(x,v)^t\ \ for\ x\in\Omega,\ v\in\mathbb{R}^d.$$
Let $A$ and $B$ be two symmetric positive definite matrices, for $0<t<1$, define their weighted geometric mean by
$$A\#_tB=A^{\frac{1}{2}}(A^{-\frac{1}{2}}BA^{-\frac{1}{2}})^tA^{\frac{1}{2}}.$$
\end{definition}

The function $p_t$ may not be a norm. However, if we still use (4) to define $p_t^{\ast}$, it will be a norm.

\begin{lemma}\cite{Mar2022}
Let $p_0$, $p_1$ be two norms, then for $0<t<1$, $p_t^{\ast}$ is a norm.
Moreover, $p_t^{\ast\ast}$ is a norm, and for all $v\in\mathbb{R}^d$, $p_t^{\ast\ast}(v)\leq p_t(v)$.
\end{lemma}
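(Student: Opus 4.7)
The plan is to verify the three norm axioms for $p_t^{\ast}$ directly from its supremum definition, then use a pointwise duality inequality to get both the norm property and the upper bound for $p_t^{\ast\ast}$.

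First I would record two easy facts about $p_t$ itself. By multiplicativity of the exponent,
$$p_t(\alpha v) = p_0(\alpha v)^{1-t}p_1(\alpha v)^t = |\alpha|^{1-t}|\alpha|^t\, p_t(v) = |\alpha|\,p_t(v),$$
so $p_t$ is positively homogeneous (even though it need not be subadditive). Since $p_0,p_1$ are norms on the finite-dimensional space $\mathbb{R}^d$, they are comparable to $|\cdot|$, and hence so is $p_t$; in particular $\{w:p_t(w)\leq 1\}$ is bounded and contains a Euclidean ball around the origin. From this, subadditivity and positive homogeneity of $p_t^{\ast}$ follow from the standard supremum-of-linear-functionals argument, finiteness of $p_t^{\ast}(v)$ follows from boundedness of the sublevel set, and $p_t^{\ast}(v)>0$ for $v\neq 0$ follows by choosing $w=\varepsilon v/|v|$ with $\varepsilon$ small enough that $p_t(w)\leq 1$, which gives $\langle v,w\rangle=\varepsilon|v|>0$. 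This handles the first assertion.

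Next I would establish the duality inequality $|\langle v,w\rangle|\leq p_t(v)\,p_t^{\ast}(w)$ for all $v,w\in\mathbb{R}^d$. If $p_t(v)>0$, positive homogeneity gives $p_t(v/p_t(v))=1$, so the definition of $p_t^{\ast}$ yields $|\langle v/p_t(v),w\rangle|\leq p_t^{\ast}(w)$, hence the inequality. If $p_t(v)=0$ then $p_0(v)=0$ or $p_1(v)=0$; since both are norms this forces $v=0$ and the inequality is trivial. Now $p_t^{\ast\ast}$ is just the dual of the norm $p_t^{\ast}$, so the first part of the lemma applied to $p_t^{\ast}$ already shows that $p_t^{\ast\ast}$ is a norm. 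Finally, the duality inequality yields
$$p_t^{\ast\ast}(v)=\sup_{p_t^{\ast}(w)\leq 1}|\langle v,w\rangle|\leq \sup_{p_t^{\ast}(w)\leq 1}p_t(v)\,p_t^{\ast}(w)\leq p_t(v).$$

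The main subtlety here is that $p_t$ itself is generally not a norm — the geometric mean of two subadditive functions need not be subadditive — so one cannot simply invoke the textbook theorem that the dual of a norm is a norm, and every step that would normally be a one-line citation must be verified from the definition. The only place where genuine care is needed is the nondegeneracy $p_t^{\ast}(v)>0$ for $v\neq 0$, which crucially uses the equivalence $p_t(w)\approx|w|$ coming from finite-dimensionality; everything else reduces to elementary manipulations of the supremum.
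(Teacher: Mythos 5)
Your proof is correct and complete; the paper states this lemma as a citation from Bownik--Cruz-Uribe without reproducing a proof, and your argument is the standard convex-duality one used there (boundedness of the sublevel set $\{w: p_t(w)\leq 1\}$ and the fact that it contains a Euclidean ball give that $p_t^{\ast}$ is a finite, nondegenerate norm, and the pointwise inequality $|\langle v,w\rangle|\leq p_t(v)\,p_t^{\ast}(w)$ gives $p_t^{\ast\ast}\leq p_t$). You also correctly identify the one genuine subtlety, namely that $p_t$ itself need not be subadditive, so the norm axioms for $p_t^{\ast}$ must be checked by hand rather than quoted from the duality of norms.
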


\begin{theorem}
Let $1\leq p_0,p_1,q_0,q_1<\infty$, $\rho_0$, $\rho_1$ be norm functions on $\Omega$, $X$ be the set of all locally integrably bounded functions
$F:\Omega\rightarrow\mathcal{K}_{bcs}(\mathbb{R}^d)$, $T:X\rightarrow X$ be a linear, monotone operator. Suppose that
$$\lVert T(F)\rVert_{L^{q_0}_\mathcal{K}(\Omega,\rho_{0})}\leq M_0\lVert F\rVert_{L^{p_0}_\mathcal{K}(\Omega,\rho_0)},$$
$$\lVert T(F)\rVert_{L^{q_1}_\mathcal{K}(\Omega,\rho_{1})}\leq M_1\lVert F\rVert_{L^{q_0}_\mathcal{K}(\Omega,\rho_1)},$$
for all $F\in X$, and
$\rho_0(T(\chi_AS))$, $\rho_1(T(\chi_AS))$ are bounded for all sets $A\subset\mathbb{R}^n$, $S\subset\mathbb{R}^d$ that make $\rho_0(\chi_AS)$, $\rho_1(\chi_AS)$ bounded.
Then for any $0<\theta<1$ and simple function $F\in L^{p}_\mathcal{K}(\Omega,\rho)$, we have
$$\lVert T(F)\rVert_{L^{q}_\mathcal{K}(\Omega,\rho)}\leq M_{\theta}\lVert F\rVert_{L^{p}_\mathcal{K}(\Omega,\rho)},$$
where $M_{\theta}$ is a constant independent of $F$, and
\begin{equation}
\rho=\rho_{\theta}^{\ast\ast},\ \ \frac{1}{p}=\frac{1-\theta}{p_0}+\frac{\theta}{p_1},\ \ \frac{1}{q}=\frac{1-\theta}{q_0}+\frac{\theta}{q_1}.
\end{equation}
By density, $T$ is bounded from $L^{p}_\mathcal{K}(\Omega,\rho)$ to $L^{q}_\mathcal{K}(\Omega,\rho)$ for all $\rho$, $p$ and $q$ as in (5).
\end{theorem}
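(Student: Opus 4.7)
The plan is to adapt the classical complex-interpolation proof of the Riesz--Thorin theorem to the convex-set valued setting, using the Hadamard three-lines lemma (Lemma 4.4) as the analytic backbone. By Lemma 4.2, simple convex-set valued functions are dense in $L^p_\mathcal{K}(\Omega,\rho)$, so it is enough to establish the bound for simple $F$; the density extension stated at the end of the theorem then follows by standard approximation.

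For a simple function $F = \sum_{k=1}^m \chi_{A_k}S_k$, I would build an analytic family $\{F_z\}_{z\in\overline{S}}$ of simple convex-set valued functions on the strip $S = \{z\in\mathbb{C}:0<\mathrm{Re}(z)<1\}$ by dilating each piece $S_k$ by the positive scalar $\rho_{\theta,x}(S_k)^{P(z)}$, where $P$ is affine in $z$ and chosen so that $P(\theta) = 1$, giving $F_\theta = F$. Using the identity $\rho_\theta(x,v) = \rho_0(x,v)^{1-\theta}\rho_1(x,v)^\theta$, one checks that on the left boundary $\mathrm{Re}(z)=0$ the family lies in $L^{p_0}_\mathcal{K}(\Omega,\rho_0)$ with norm controlled by a power of $\lVert F\rVert_{L^p_\mathcal{K}(\Omega,\rho)}$, and symmetrically on $\mathrm{Re}(z)=1$ it lies in $L^{p_1}_\mathcal{K}(\Omega,\rho_1)$. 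Next, I would dualize $\lVert T(F)\rVert_{L^q_\mathcal{K}(\Omega,\rho)}$ against a simple test function $G$ in $L^{q'}_\mathcal{K}(\Omega,\rho^{*})$, where $\rho^{*} = \rho_\theta^{*}$ is a genuine norm by Lemma 4.5, construct an analogous analytic family $G_z$ using the dual norms $\rho_0^{*}$ and $\rho_1^{*}$, and set $\Phi(z) = \Lambda(T(F_z),G_z)$, where $\Lambda$ denotes the natural bilinear pairing between $L^{q_i}_\mathcal{K}(\Omega,\rho_i)$ and $L^{q_i'}_\mathcal{K}(\Omega,\rho_i^{*})$. Linearity of $T$, together with the boundedness hypothesis on $\rho_i(T(\chi_A S))$, reduces $T(F_z)$ to a finite sum of the fixed convex-set valued pieces $T(\chi_{A_k}S_k)$ with analytic scalar coefficients, so Lemma 4.3 yields analyticity of $\Phi$ on $S$ and continuity on $\overline{S}$.

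The two endpoint hypotheses, combined with H\"older's inequality applied through $\Lambda$, then furnish the edge bounds $|\Phi(iy)|\leq M_0\lVert F\rVert_{L^p_\mathcal{K}(\Omega,\rho)}\lVert G\rVert_{L^{q'}_\mathcal{K}(\Omega,\rho^{*})}$ and $|\Phi(1+iy)|\leq M_1\lVert F\rVert_{L^p_\mathcal{K}(\Omega,\rho)}\lVert G\rVert_{L^{q'}_\mathcal{K}(\Omega,\rho^{*})}$, uniformly in $y\in\mathbb{R}$. Applying Lemma 4.4 at $z=\theta$ and evaluating the two Poisson-type integrals produces $|\Phi(\theta)|\leq M_0^{1-\theta}M_1^\theta\lVert F\rVert_{L^p_\mathcal{K}(\Omega,\rho)}\lVert G\rVert_{L^{q'}_\mathcal{K}(\Omega,\rho^{*})}$, and taking the supremum over admissible $G$ yields the target inequality with $M_\theta = M_0^{1-\theta}M_1^\theta$, up to a constant coming from the duality pairing.

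The hardest part will be constructing the analytic family $F_z$ in a way that genuinely respects the convex-set structure, since one cannot directly raise a set $S_k$ to a complex power and the whole force of the classical scalar argument rests on exactly such exponentiation. The resolution is to modulate only by the positive scalars $\rho_{\theta,x}(S_k)^{P(z)}$, which act as honest dilations on $S_k$ and are analytic functions of $z$; balancing this dilation against the factorization $\rho_\theta = \rho_0^{1-\theta}\rho_1^\theta$ on each boundary line is the delicate bookkeeping step. A second conceptual obstacle is the appearance of the biconjugate $\rho = \rho_\theta^{**}$ rather than $\rho_\theta$ in the hypothesis: this is forced by the duality step, because $\rho_\theta$ need not itself be a norm, while its biconjugate is, by Lemma 4.5, the largest norm it dominates and hence the correct object through which the pairing defining $\Phi$ closes correctly.
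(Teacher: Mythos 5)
Your overall template (dualize, build an analytic family, apply the three-lines lemma) is the classical one, but the central construction does not survive the passage to set-valued functions, and this is exactly where the paper takes a different route. You propose to build an analytic family $F_z$ by ``dilating each piece $S_k$ by the positive scalar $\rho_{\theta,x}(S_k)^{P(z)}$.'' For non-real $z$ the quantity $a^{P(z)}=e^{P(z)\log a}$ is a genuinely complex number, not a positive scalar, and $\lambda E$ is undefined for complex $\lambda$ and $E\in\mathcal{K}_{bcs}(\mathbb{R}^d)$ (Definition 2.1 only allows $\lambda\in\mathbb{R}$). If you instead dilate by the modulus $a^{\mathrm{Re}\,P(z)}$ so as to stay inside $\mathcal{K}_{bcs}(\mathbb{R}^d)$, the family depends only on $\mathrm{Re}\,z$ and is nowhere analytic, so Lemma 4.4 cannot be invoked; either way $T(F_z)$ is not defined, since $T$ acts only on real set-valued functions. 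The same objection applies to your dual family $G_z$, and in addition the ``natural bilinear pairing'' between $L^{q}_\mathcal{K}(\Omega,\rho)$ and $L^{q'}_\mathcal{K}(\Omega,\rho^{\ast})$ is not developed anywhere in the paper and would have to be constructed and shown to compute the norm before your $\Phi$ makes sense.

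The paper's proof sidesteps all of this by never varying the set-valued argument: it applies $T$ to the fixed simple function $F$ once, and places the entire analytic dependence on $z$ in the scalar quantities $\rho_0(T(F)(x))^{(1-z)P(z)}\,\rho_1(T(F)(x))^{zP(z)}$ and in a scalar test function $g_z\in L^{q'}(\Omega)$ dualizing the real-valued function $x\mapsto\rho(T(F)(x))$. The biconjugate $\rho=\rho_\theta^{\ast\ast}$ then enters only at the last step, via $\rho_\theta^{\ast\ast}\leq\rho_\theta=\rho_0^{1-\theta}\rho_1^{\theta}$ (Lemma 4.5), to pass from $A(\theta)$ to $\int_\Omega\rho(T(F)(x))g(x)\,dx$; it is not needed to ``close a duality pairing'' as you suggest. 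You would need to reorganize your argument along these lines. A separate caveat: your claimed constant $M_\theta=M_0^{1-\theta}M_1^{\theta}$ presupposes boundary estimates $|\Phi(iy)|\leq M_0\cdots$ and $|\Phi(1+iy)|\leq M_1\cdots$ that your ill-defined families cannot deliver; note that the paper's own argument does not obtain the geometric-mean constant either, but only a finite $M_\theta$ coming from the crude growth bound $\log|A(z)|\lesssim 1+|\mathrm{Im}\,z|^2$ together with the extra hypothesis that $\rho_0(T(\chi_A S))$ and $\rho_1(T(\chi_A S))$ are bounded.
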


\begin{proof}
First, assume that $\lVert F\rVert_{L^{p}_\mathcal{K}(\Omega,\rho)}=1$. Let
$$F=\sum_{k=1}^m\chi_{A_k}S_k$$
be a simple function in $X$, where $S_k\in\mathcal{K}_{bcs}$ and $A_k$ are pairwise disjoint subsets of $\Omega$ with finite measure.
We need to control
$$\lVert T(F)\rVert_{L^{q}_\mathcal{K}(\Omega,\rho)}=\left(\int_{\Omega}\rho(T(F)(x))^qdx\right)^{\frac{1}{q}}=\sup_{g}\left|\int_{\Omega}\rho(T(F)(x))g(x)dx\right|,$$
where the supremum is taken over all simple functions $g\in L^{q'}(\Omega)$ with $L^{q'}(\Omega)$ norm not bigger than 1. Write
$$g=\sum_{j=1}^nb_je^{i\beta_j}\chi_{B_j},$$
where $b_j>0$, $\beta_j$ are real, and $B_j$ are pairwise disjoint subsets of $\Omega$ with finite measure.\\
Denote the open strip $S=\{z\in\mathbb{C}:0<\mathrm{Re}z<1\}$, and its closure $\bar{S}=\{z\in\mathbb{C}:0\leq\mathrm{Re}z\leq1\}$. For $z\in\bar{S}$, define
$$P(z)=\frac{p}{p_0}(1-z)+\frac{p}{p_1}z,\ \ Q(z)=\frac{q'}{q'_0}(1-z)+\frac{q'}{q'_1}z,$$
and
$$A(z)=\int_{\Omega}\rho_0(T(F)(x))^{(1-z)P(z)}\rho_1(T(F)(x))^{zP(z)}g_z(x)dx,$$
where
$$g_z=\sum_{j=1}^nb_j^{Q(z)}e^{i\beta_j}\chi_{B_j}.$$
We claim that $F(z)$ is analytic on $S$. By the linearity of $T$,
$$A(z)=\sum_{j=1}^nb_j^{Q(z)}e^{i\beta_j}\int_{\Omega}\rho_0\left(\sum_{k=1}^mT(S_k\chi_{A_k})(x)\right)^{(1-z)P(z)}\rho_1\left(\sum_{k=1}^mT(S_k\chi_{A_k})(x)\right)^{zP(z)}\chi_{B_j}(x)dx.$$
Consider the open set $S_{\lambda}:=\{z\in\mathbb{C}:0<\mathrm{Re}(z)<1,\ |\mathrm{Im}(z)|<\lambda\}$ for given $\lambda>0$, for $z\in S_{\lambda}$, by the sublinearity of $\rho_0$ and $\rho_1$,
\begin{align*}
|A(z)|&\leq\sum_{j=1}^nb_j^{|Q(z)|}\int_{\Omega}\left(\sum_{k=1}^m\rho_0(T(\chi_{A_k}S_k)(x))\right)^{|1-z||P(z)|}\\
&\ \ \ \ \times\left(\sum_{k=1}^m\rho_1(T(\chi_{A_k}S_k)(x))\right)^{|z||P(z)|}\chi_{B_j}(x)dx\\
&=:\sum_{j=1}^nb_j^{|Q(z)|}\int_{\Omega}\phi(z,x)dx.
\end{align*}
For all $x\in\Omega$ and $z\in S_{\lambda}$, $\rho_0(T(\chi_{A_k}S_k)(x))$, $\rho_1(T(\chi_{A_k}S_k)(x))$ are bounded by assumption, and $|Q(z)|$, $|1-z||P(z)|$, $|z||P(z)|$ are all bounded by a constant $C_{\lambda}$ independent of $z$, thus $\phi(z,x)$ is bounded on $S_{\lambda}\times\Omega$. By Lemma 4.3, $F(z)$ is analytic in $S_{\lambda}$, and then $F(z)$ is analytic in $S$ by the arbitrary of $\lambda$.\\
Besides, for all $z\in\bar{S}$,
\begin{align*}
\log|A(z)|&\leq\log\left(\sum_{j=1}^nb_j^{|Q(z)|}|B_j|C_1^{|1-z||P(z)|}C_2^{|z||P(z)|}\right)\\
&\lesssim\log(C_1^{|1-z||P(z)|}C_2^{|z||P(z)|})\lesssim(|1-z|+|z|)|P(z)|\\
&\lesssim|1-z|^2+|z|^2\lesssim 1+|\mathrm{Im}z|^2,
\end{align*}
thus we have
$$\sup_{z\in\bar{S}}e^{-|\mathrm{Im}z|}\log|A(z)|\lesssim\sup_{z\in\bar{S}}e^{-|\mathrm{Im}z|}(1+|\mathrm{Im}z|^2)=1,$$
therefore, by Lemma 4.4, for all $0<x<1$,
$$|A(x)|\leq\exp\left\{\frac{\sin(\pi x)}{2}\int_{-\infty}^{\infty}\left[\frac{\log|A(iy)|}{\cosh(\pi y)-\cos(\pi x)}+\frac{\log|A(1+iy)|}{\cosh(\pi y)+\cos(\pi x)}\right]dy\right\}.$$
Note that
\begin{align*}
&\int_{-\infty}^{\infty}\left[\frac{\log|A(iy)|}{\cosh(\pi y)-\cos(\pi x)}+\frac{\log|A(1+iy)|}{\cosh(\pi y)+\cos(\pi x)}\right]dy\\
&\ \ \ \ \lesssim\int_{-\infty}^{\infty}\left[\frac{1+|y|^2}{e^{|y|}-\cos(\pi x)}+\frac{1+|y|^2}{e^{|y|}+\cos(\pi x)}\right]dy<C(x)<\infty,
\end{align*}
we have $|A(x)|\leq M_x<\infty$. Set $x=\theta$, we obtain
$$|A(\theta)|=\int_{\Omega}\rho_0(T(F)(x))^{(1-\theta)P(\theta)}\rho_1(T(F)(x))^{\theta P(\theta)}g_{\theta}(x)dx\leq M_{\theta}.$$
Note that $P(\theta)=Q(\theta)=1$, $g_{\theta}(x)=g(x)$, by Lemma 4.5, we have
$$\int_{\Omega}\rho(T(F)(x))g(x)dx\leq M_{\theta}.$$
Therefore, $\lVert T(F)\rVert_{L^{q}_\mathcal{K}(\Omega,\rho)}\leq M_{\theta}$ for all simple functions $F\in L^{p}_\mathcal{K}(\Omega,\rho)$ satisfying $\lVert F\rVert_{L^{p}_\mathcal{K}(\Omega,\rho)}=1$. For an arbitrary simple function $F\in L^{p}_\mathcal{K}(\Omega,\rho)$, by the linearity of $T$ and $\rho$ (with number multiplication), we obtain the same conclusion.\\
Finally, for any $F\in L^{p}_\mathcal{K}(\Omega,\rho)$ and $\epsilon>0$, by Lemma 4.2, there exists simple measurable function $G\in X$ satisfying $G(x)\subset F(x)$ for all $x\in\Omega$, and
$$d_p(F,G)=\left(\int_{\Omega}d_{H,x}(F(x),G(x))^pd\mu(x)\right)^{\frac{1}{p}}<\epsilon.$$
Set $d(x)=d_{H,x}(F(x),G(x))$, then $d\in L^p(\Omega)$ and $\lVert d\rVert_p<\epsilon$. By the definition of Hausdorff distance, for all $x\in\Omega$,
$$F(x)\subset G(x)+d(x)\boldsymbol{\overline{B}}.$$
Then, by the linearity and monotonicity of $T$,
\begin{align*}
\lVert T(F)\rVert_{L^{q}_\mathcal{K}(\Omega,\rho)}&\leq\lVert T(G)\rVert_{L^{q}_\mathcal{K}(\Omega,\rho)}+\lVert T(d(\cdot)\boldsymbol{\overline{B}})\rVert_{L^{q}_\mathcal{K}(\Omega,\rho)}\leq M_{\theta}(\lVert G\rVert_{L^{p}_\mathcal{K}(\Omega,\rho)}+\lVert d(\cdot)\boldsymbol{\overline{B}}\rVert_{L^{p}_\mathcal{K}(\Omega,\rho)})\\
&=M_{\theta}(\lVert G\rVert_{L^{p}_\mathcal{K}(\Omega,\rho)}+\lVert d\rVert_p)<M_{\theta}(\lVert F\rVert_{L^{p}_\mathcal{K}(\Omega,\rho)}+\epsilon).
\end{align*}
By the arbitrary of $\epsilon$, we have $\lVert T(F)\rVert_{L^{q}_\mathcal{K}(\Omega,\rho)}\leq M_{\theta}\lVert F\rVert_{L^{p}_\mathcal{K}(\Omega,\rho)}$, which completes the proof.
\end{proof}

By the similar argument as Remark 3.1, Theorem 4.1 can also deduce Riesz-Theoin interpolation theorem on Lebesgue spaces with some additional conditions of operator $T$.

To prove the reverse factorization property, we need the $\mathcal{A}_{p,norm}$ condition of norm functions.

\begin{definition}\cite{Mar2022}
Fix $1\leq p<\infty$ and suppose $\rho(\cdot,v)\in L^p_{loc}$ for all $v\in\mathbb{R}^d$, define
$$\langle\rho\rangle_{p,Q}(v)=\left(\frac{1}{m_n(Q)}\int_Q\rho_x(v)^pdx\right)^{\frac{1}{p}}.$$
\end{definition}

\begin{definition}\cite{Mar2022}
Given a norm function $\rho:\mathbb{R}^n\times\mathbb{R}^d\rightarrow[0,\infty)$, then for $1\leq p<\infty$, $\rho\in\mathcal{A}_{p,norm}$ if for every cube $Q$ and $v\in\mathbb{R}^d$,
$$\langle\rho^{\ast}\rangle_{p',Q}(v)\lesssim\langle\rho\rangle_{p,Q}^{\ast}(v).$$
\end{definition}

\begin{definition}\cite{Mar2022}
A matrix mapping is called measurable if each of its components is a measurable function. Let $A:\Omega\rightarrow\mathcal{M}_d$ be a measurable matrix mapping, define an seminorm function $\rho_A$ by
$$\rho_A(x,v)=|A(x)v|,\ \ x\in\Omega,\ v\in\mathbb{R}^d.$$
\end{definition}

For each $x$, $\rho_A(x,\cdot)$ is a seminorm, and since $A$ is measurable, the map $x\rightarrow\rho_A(x,v)$ is measurable for all $v$. Reversely, for a given norm function, the similar result exists.

\begin{lemma}\cite{Mar2022}
Let $\rho$ be a norm function, then there exists an associated measurable matrix mapping $W:\Omega\rightarrow\mathcal{S}_d$ such that for a.e. $x\in\Omega$ and every $v\in\mathbb{R}^d$, $W(x)$ is positive definite, and
$$\rho_W(x,v)\leq\rho(x,v)\leq\sqrt{d}\rho_W(x,v).$$
\end{lemma}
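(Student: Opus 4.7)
The statement is essentially a pointwise application of John's ellipsoid theorem (in its symmetric version) combined with a measurable selection argument, so I would organize the proof in two stages.

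In the first stage I would construct $W(x)$ pointwise. For each fixed $x\in\Omega$, the unit ball $B_x:=\{v\in\mathbb{R}^d:\rho_x(v)\leq 1\}$ is a bounded, centrally symmetric, convex body with nonempty interior. By John's theorem there is a unique ellipsoid $E_x$ of maximal volume contained in $B_x$; by the symmetry of $B_x$ this ellipsoid is centred at the origin, so there is a unique positive definite $U(x)\in\mathcal{S}_d$ with $E_x=\{v:|U(x)v|\leq 1\}$. Setting $W(x):=U(x)$, the sharp inclusion $E_x\subset B_x\subset\sqrt{d}\,E_x$ for centrally symmetric bodies translates directly to
$$|W(x)v|\leq \rho_x(v)\leq \sqrt{d}\,|W(x)v|=\sqrt{d}\,\rho_W(x,v),$$
which is the desired pointwise estimate. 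Choosing the symmetric version of John gives the sharp constant $\sqrt{d}$; the non-symmetric version would give only $d$.

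In the second stage I would verify that $x\mapsto W(x)$ is measurable. Since $\rho$ is a norm function, the map $x\mapsto \rho_x(v)$ is measurable for every $v\in\mathbb{R}^d$, and hence $x\mapsto B_x$ is a measurable set-valued map whose values are symmetric convex bodies. It is classical that the John ellipsoid is uniquely determined by the body and depends continuously on it with respect to the Hausdorff distance, so $x\mapsto E_x$ is measurable; the map $E_x\mapsto W(x)$ is then continuous on the space of centred ellipsoids, giving measurability of $W$. Alternatively, one can characterize $W(x)$ as the unique maximizer of $\det(A)$ over the closed, $x$-measurable set $\{A\in\mathcal{S}_d \text{ positive definite}: \rho_x(A^{-1}w)\leq |w|\ \forall w\in\mathbb{R}^d\}$, and invoke the Kuratowski--Ryll-Nardzewski measurable selection theorem; uniqueness of the maximizer guarantees that any such selection agrees with $W$.

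The main obstacle is the measurability of $x\mapsto W(x)$: the geometric part reduces to a classical theorem, but one must argue carefully that the John ellipsoid can be selected measurably in $x$. A practical way around this difficulty, if the abstract selection argument feels too coarse, is to reduce first to the case when $\rho$ is a simple norm function in the sense of Lemma 4.2 (with finitely many values), for which $W(x)$ is literally piecewise constant, and then pass to the limit using the Hausdorff-continuity of the John ellipsoid together with a density/approximation argument on $\rho$. Once measurability is secured, the two-sided bound is automatic from John.
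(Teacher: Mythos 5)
Your overall strategy --- the extremal ellipsoid of the unit ball $B_x=\{v:\rho_x(v)\leq 1\}$ combined with a measurable-selection (or approximation-by-simple-norm-functions) argument --- is exactly the argument behind this lemma in the cited source; the paper itself gives no proof, quoting the result from \cite{Mar2022}. However, there is a concrete error in the step where you translate the inclusion into the inequality. If $E_x=\{v:|U(x)v|\leq 1\}$ is the maximal-volume \emph{inscribed} ellipsoid, then $E_x\subset B_x$ says the unit ball of $|U(x)\cdot|$ is contained in the unit ball of $\rho_x$, and a smaller unit ball means a larger norm, so this gives $\rho_x(v)\leq |U(x)v|$; likewise $B_x\subset\sqrt{d}\,E_x$ gives $|U(x)v|\leq\sqrt{d}\,\rho_x(v)$. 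With $W=U$ you therefore obtain $d^{-1/2}\rho_W\leq\rho\leq\rho_W$, i.e.\ the two-sided bound with the roles of $\rho$ and $\rho_W$ interchanged relative to the statement; it does not "translate directly" as claimed. The fix is trivial but necessary: either set $W(x)=d^{-1/2}U(x)$, or work instead with the minimal-volume \emph{circumscribed} (L\"owner) ellipsoid $L_x=\{v:|V(x)v|\leq 1\}$, for which $B_x\subset L_x\subset\sqrt{d}\,B_x$ yields exactly $|V(x)v|\leq\rho_x(v)\leq\sqrt{d}\,|V(x)v|$. The same reversal affects your variational characterization: the inscribed ellipsoid of maximal volume corresponds to \emph{minimizing} $\det A$ over $\{A:\rho_x(v)\leq|Av|\ \text{for all}\ v\}$, since the ellipsoid $\{v:|Av|\leq 1\}$ has volume proportional to $1/\det A$.

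Apart from this normalization slip the proof is sound and follows the same route as the reference: $B_x$ is a bounded, symmetric convex body with nonempty interior because $\rho_x$ is a genuine norm (not merely a seminorm); uniqueness and Hausdorff-continuity of the extremal ellipsoid give measurability of $x\mapsto W(x)$ from the measurability of $x\mapsto\rho_x(v)$ (it suffices to test $v$ in a countable dense subset of $\mathbb{R}^d$ to see that $x\mapsto B_x$ is measurable); and positive definiteness of $W(x)$ is automatic since the ellipsoid is nondegenerate.
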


The following lemma shows the connection between norm functions and their associated matrix weights.

\begin{lemma}\cite{Mar2022}
Given a norm function $\rho:\mathbb{R}^n\times\mathbb{R}^d\rightarrow[0,\infty)$ with associated matrix mapping $W$ and $1<p<\infty$, $\rho\in\mathcal{A}_{p,norm}$ if and only if $W\in\mathcal{A}_{p,matrix}$, that is,
$$[W]_{\mathcal{A}_{p,matrix}}:=\sup_{Q}\left(\frac{1}{m_n(Q)}\int_Q\left(\frac{1}{m_n(Q)}\int_Q\lVert W(x)W^{-1}(y)\rVert_{\mathrm{op}}^{p'}dy\right)^{\frac{p}{p'}}dx\right)^{\frac{1}{p}}<\infty.$$
When $p=1$, $\rho\in\mathcal{A}_{1,norm}$ if and only if $W\in\mathcal{A}_{1,matrix}$, that is,
$$[W]_{\mathcal{A}_{1,matrix}}:=\sup_Q\mathop{\mathrm{ess}\sup}\limits_{x\in Q}\frac{1}{m_n(Q)}\int_Q\lVert W^{-1}(x)W(y)\rVert_{\mathrm{op}}dy<\infty.$$
\end{lemma}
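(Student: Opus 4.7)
The strategy is to translate the functional-analytic $\mathcal{A}_{p,\mathrm{norm}}$ condition into a purely matrix-theoretic statement by first reducing to the case $\rho=\rho_W$ and then introducing the classical \emph{reducing operator} from matrix $A_p$ theory. The first step is to reduce to the case $\rho=\rho_W$ with $W(x)\in\mathcal{S}_d$ positive definite almost everywhere. Lemma 4.7 provides the pointwise comparability $\rho_W\leq\rho\leq\sqrt{d}\,\rho_W$; passing to duals reverses and rescales the inequality, so $\frac{1}{\sqrt{d}}\rho_W^{\ast}\leq\rho^{\ast}\leq\rho_W^{\ast}$. These comparabilities are preserved under $L^p$-averaging over a cube $Q$ and under the dual operation on norms on $\mathbb{R}^d$. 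Thus $\rho\in\mathcal{A}_{p,\mathrm{norm}}$ if and only if $\rho_W\in\mathcal{A}_{p,\mathrm{norm}}$, with constants depending only on $d$. The associated $W$ is moreover determined up to dimensional factors, so the matrix $A_p$ constant is well defined up to the same factors. With $W(x)$ symmetric positive definite, the substitution $u=W(x)w$ in the definition of $\rho_W^{\ast}$ immediately yields $\rho_W^{\ast}(x,v)=|W(x)^{-1}v|$.

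Second, I would introduce the reducing operator. For each cube $Q$, the map $v\mapsto\langle\rho_W\rangle_{p,Q}(v)=\bigl(|Q|^{-1}\int_Q|W(x)v|^p\,dx\bigr)^{1/p}$ is a norm on $\mathbb{R}^d$, so by John's ellipsoid theorem there exists $V_Q\in\mathcal{S}_d$ positive definite, unique up to a dimensional constant, with $|V_Q v|\approx\langle\rho_W\rangle_{p,Q}(v)$ for every $v\in\mathbb{R}^d$. Dualizing gives $|V_Q^{-1}v|\approx\langle\rho_W\rangle_{p,Q}^{\ast}(v)$. Substituting $v=V_Q u$ turns the $\mathcal{A}_{p,\mathrm{norm}}$ inequality $\langle\rho_W^{\ast}\rangle_{p',Q}(v)\lesssim\langle\rho_W\rangle_{p,Q}^{\ast}(v)$ into
$$\Bigl(\tfrac{1}{|Q|}\int_Q|W(x)^{-1}V_Q u|^{p'}\,dx\Bigr)^{1/p'}\lesssim|u|,\qquad u\in\mathbb{R}^d,$$
uniformly in $Q$. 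An $\varepsilon$-net argument on the unit sphere of $\mathbb{R}^d$ exchanges the supremum over $|u|\leq 1$ with the $L^{p'}$-average up to dimensional constants, giving the equivalent condition $\bigl(|Q|^{-1}\int_Q\|W(x)^{-1}V_Q\|_{\mathrm{op}}^{p'}\,dx\bigr)^{1/p'}\lesssim 1$ uniformly in $Q$.

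Third, I would match this against $[W]_{A_{p,\mathrm{matrix}}}$. The submultiplicativity $\|W(x)W(y)^{-1}\|_{\mathrm{op}}\leq\|W(x)V_Q^{-1}\|_{\mathrm{op}}\|V_Q W(y)^{-1}\|_{\mathrm{op}}$ separates the two variables, so the matrix $A_p$ constant is controlled by the product of an $L^p$-average of $\|W(x)V_Q^{-1}\|_{\mathrm{op}}$ and an $L^{p'}$-average of $\|V_Q W(y)^{-1}\|_{\mathrm{op}}$. By the very choice of $V_Q$, the first average is automatically bounded by a dimensional constant (again via an $\varepsilon$-net), so the $\mathcal{A}_{p,\mathrm{norm}}$ condition implies $[W]_{A_{p,\mathrm{matrix}}}<\infty$. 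The reverse implication is obtained by testing the matrix $A_p$ condition against unit vectors $u$ and unwinding the definitions to recover the averaged vector inequality above. The case $p=1$ is treated by the same scheme with essential suprema replacing the $L^{p'}$-averages and with the reducing operator built from the $L^1$-average of $|W(x)v|$.

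The main obstacle is the $\varepsilon$-net interchange between the vector inequality for $|W(x)^{-1}V_Q u|^{p'}$ and the operator-norm inequality for $\|W(x)^{-1}V_Q\|_{\mathrm{op}}^{p'}$: the operator norm is a supremum over the unit sphere that commutes with integration only up to constants depending on the covering number of the sphere, and the reducing operator must be chosen measurably in $Q$ so that the sup over cubes in the definition of $[W]_{A_{p,\mathrm{matrix}}}$ is genuinely captured. Carefully tracking these dimensional constants, and verifying that all comparabilities are uniform in $Q$, is the delicate part of the argument.
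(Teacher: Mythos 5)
This lemma is quoted from \cite{Mar2022} and the paper gives no proof of its own, so there is nothing internal to compare against; your reducing-operator argument is essentially the standard proof given in that reference (reduce to $\rho=\rho_W$ via Lemma 4.6, compute $\rho_W^{\ast}(x,v)=|W(x)^{-1}v|$, introduce the John-ellipsoid reducing operators $V_Q$ for $\langle\rho_W\rangle_{p,Q}$, and pass between vector and operator-norm averages by a finite net or Hilbert--Schmidt bound). The outline is correct, including the submultiplicativity step $\lVert W(x)W^{-1}(y)\rVert_{\mathrm{op}}\leq\lVert W(x)V_Q^{-1}\rVert_{\mathrm{op}}\lVert V_QW^{-1}(y)\rVert_{\mathrm{op}}$ and the $p=1$ modification; the only superfluous worry is measurability of $Q\mapsto V_Q$, which is not needed since one only takes a supremum over cubes.
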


We need the following characterization of norm weights.

\begin{definition}\cite{Mar2022}
Let $F:\mathbb{R}^n\rightarrow\mathcal{K}_{bcs}(\mathbb{R}^d)$ that is locally integrably bounded, for a fixed cube $Q$, we define the averaging operator $A_Q$ by
$$A_QF(x)=\frac{1}{|Q|}\int_{Q}F(y)dy\cdot\chi_Q(x).$$
\end{definition}

\begin{lemma}\cite{Mar2022}
Given $1\leq p<\infty$ and a norm function $\rho$, the following propositions are equivalent:

(i) $\rho\in\mathcal{A}_{p,norm}$;

(ii) Given any cube $Q$, $A_Q:L^p_{\mathcal{K}}(\mathbb{R}^n,\rho)\rightarrow L^p_{\mathcal{K}}(\mathbb{R}^n,\rho)$ is bounded.
\end{lemma}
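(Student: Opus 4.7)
The plan is to prove the two implications separately; both hinge on the interplay between support functions and the duality $\rho_x^{\ast}$.

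For \emph{(ii) $\Rightarrow$ (i)}, I would test the boundedness of $A_Q$ against a carefully designed convex-set valued function. Fix a cube $Q$ and $v\in\mathbb{R}^d$. Applying a measurable selection theorem (e.g.\ Kuratowski--Ryll-Nardzewski) to the closed-valued multifunction $y\mapsto\{w\in\mathbb{R}^d:\rho_y(w)\leq 1,\ \langle v,w\rangle=\rho_y^{\ast}(v)\}$, choose a measurable $w:Q\to\mathbb{R}^d$ with $\rho_y(w(y))\leq 1$ and $\langle v,w(y)\rangle=\rho_y^{\ast}(v)$, and set
\[
F(y)=\rho_y^{\ast}(v)^{p'/p}\,\overline{\mathrm{conv}}(\{w(y),-w(y)\})\,\chi_Q(y).
\]
A direct computation gives $\lVert F\rVert_{L^p_{\mathcal{K}}(\mathbb{R}^n,\rho)}^p\leq|Q|\,\langle\rho^{\ast}\rangle_{p',Q}(v)^{p'}$. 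On the other hand, $u_Q:=\int_Q\rho_y^{\ast}(v)^{p'/p}w(y)\,dy$ is the integral of a selection of $F$, so by symmetry $\pm u_Q\in\int_Q F(y)\,dy$, giving $A_QF(x)\supset|Q|^{-1}\overline{\mathrm{conv}}(\{\pm u_Q\})\chi_Q(x)$ and hence $\lVert A_QF\rVert^p\geq|Q|^{1-p}\langle\rho\rangle_{p,Q}(u_Q)^p$. Using $p'/p+1=p'$ one also obtains $\langle u_Q,v\rangle=|Q|\,\langle\rho^{\ast}\rangle_{p',Q}(v)^{p'}$. Combining $\lVert A_QF\rVert\leq C\lVert F\rVert$ with the dual-norm H\"{o}lder inequality $\langle u_Q,v\rangle\leq\langle\rho\rangle_{p,Q}(u_Q)\,\langle\rho\rangle_{p,Q}^{\ast}(v)$ and simplifying via $p'-p'/p=1$ yields $\langle\rho^{\ast}\rangle_{p',Q}(v)\leq C\,\langle\rho\rangle_{p,Q}^{\ast}(v)$, i.e.\ $\rho\in\mathcal{A}_{p,norm}$.

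For \emph{(i) $\Rightarrow$ (ii)}, the plan is to pass to matrix-weight language via Lemmas 4.6 and 4.7. Pick $W:\mathbb{R}^n\to\mathcal{S}_d$ with $\rho_W\leq\rho\leq\sqrt{d}\,\rho_W$; then $L^p_{\mathcal{K}}(\rho)$ and $L^p_{\mathcal{K}}(\rho_W)$ have equivalent norms and the hypothesis becomes $W\in\mathcal{A}_{p,matrix}$. For $F\in L^p_{\mathcal{K}}(\rho)$ write $A_QF(x)=S\chi_Q(x)$ with $S=|Q|^{-1}\int_QF(y)\,dy$. The support-function identity $h_S(w)=|Q|^{-1}\int_Q h_{F(y)}(w)\,dy$, the bound $h_{F(y)}(w)\leq\rho_y(F(y))\,\rho_y^{\ast}(w)$, and H\"{o}lder in $y$ together give
\[
\rho_x(S)=\sup_{\rho_x^{\ast}(w)\leq 1}h_S(w)\leq\langle\rho(F)\rangle_{p,Q}\,\sup_{\rho_x^{\ast}(w)\leq 1}\langle\rho^{\ast}\rangle_{p',Q}(w),
\]
and by $\mathcal{A}_{p,norm}$ the inner supremum is $\leq C\sup_{\rho_x^{\ast}(w)\leq 1}\langle\rho\rangle_{p,Q}^{\ast}(w)$. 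A reducing-operator argument then approximates $\langle\rho\rangle_{p,Q}$, within a dimensional constant, by a Hilbertian norm $v\mapsto|V_Qv|$; the $\mathcal{A}_{p,matrix}$ condition on $W$ supplies $|Q|^{-1}\int_Q\lVert V_QW(x)^{-1}\rVert_{\mathrm{op}}^{p'}\,dx\lesssim 1$, from which $\int_Q\rho_x(S)^p\,dx\lesssim\int_Q\rho_y(F(y))^p\,dy$ follows by a standard manipulation.

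The main obstacle is precisely the last estimate: bounding $\int_Q\bigl(\sup_{\rho_x^{\ast}(w)\leq 1}\langle\rho\rangle_{p,Q}^{\ast}(w)\bigr)^p\,dx\lesssim|Q|$ cannot be obtained from the pointwise $\mathcal{A}_{p,norm}$ inequality by plain H\"{o}lder or Jensen, because the supremum over the unit ball of $\rho_x^{\ast}$ and the integration in $x$ do not commute. The reducing-operator (John-ellipsoid) approximation, combined with Roudenko's characterisation of matrix $A_p$ weights recalled in the introduction, is what makes this step succeed; essentially all of the analytic content of the lemma is concentrated there.
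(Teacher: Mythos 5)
The paper does not actually prove this lemma; it is quoted from \cite{Mar2022}, and your outline is essentially a reconstruction of the argument given there (duality testing for the necessity, support functions plus reducing operators for the sufficiency), so the overall route is the right one. Your (ii) $\Rightarrow$ (i) direction is correct as written for $1<p<\infty$: the exponent bookkeeping $p'/p+1=p'$ and $p'-p'/p=1$ checks out, and the measurable-selection step is legitimate. Two points deserve attention. First, the case $p=1$ is included in the statement but your construction uses $\rho_y^{\ast}(v)^{p'/p}$, which is meaningless when $p'=\infty$; there you must instead test $A_Q$ on $F(y)=\overline{\mathrm{conv}}(\{\pm w(y)\})\chi_E(y)$ for a small set $E\subset Q$ on which $\rho_y^{\ast}(v)$ is close to its essential supremum. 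Second, the ``main obstacle'' you flag in (i) $\Rightarrow$ (ii) closes more easily than you suggest, and not via the $\mathcal{A}_{p,matrix}$ condition: after the duality swap the quantity to bound is $\int_Q\bigl(\sup_{\langle\rho\rangle_{p,Q}(w)\leq1}\rho_x(w)\bigr)^p\,dx$, and once John's theorem gives $\langle\rho\rangle_{p,Q}(w)\approx|V_Qw|$ you simply dominate the supremum by $\sum_{i=1}^d\rho_x(V_Q^{-1}e_i)$ and use $\int_Q\rho_x(V_Q^{-1}e_i)^p\,dx=|Q|\,\langle\rho\rangle_{p,Q}(V_Q^{-1}e_i)^p\approx|Q|$, which is just the definition of the reducing operator; no second appeal to the weight condition is needed, and the relevant exponent is $p$, not $p'$ as in the integral you wrote. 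With those two repairs the proof is complete.
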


We also need the following results about the weighted geometric mean.

\begin{lemma}\cite{Mar2022}
Suppose that $A,B\in\mathcal{S}_d$, and the norms $p_0$ and $p_1$ are given by
$$p_0(v)=|A^{\frac{1}{2}}v|\ \ and\ \ p_1(v)=|B^{\frac{1}{2}}v|\ \ for\ v\in\mathbb{R}^d,$$
then the double dual of the weighted geometric mean $p_t$ satisfies
$$p_t^{\ast\ast}(v)\approx|(A\#_tB)^{\frac{1}{2}}v|\ \ for\ v\in\mathbb{R}^d.$$
\end{lemma}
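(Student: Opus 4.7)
The plan is to reduce to a simultaneous diagonalization of $A$ and $B$, where both $p_t$ and $|(A\#_tB)^{1/2}\cdot|$ take concrete diagonal forms, and then prove the two-sided equivalence directly in that model. Specifically, I would diagonalize $A^{-1/2}BA^{-1/2}=U\Lambda U^{*}$ with $\Lambda=\mathrm{diag}(\lambda_1,\ldots,\lambda_d)$, $\lambda_i>0$, and $U$ orthogonal, and set $X=A^{-1/2}U$. The change of variables $v=Xw$ then gives $p_0(Xw)^2=|w|^2$, $p_1(Xw)^2=\sum_i\lambda_iw_i^2$, and $|(A\#_tB)^{1/2}Xw|^2=\sum_i\lambda_i^tw_i^2$, because $X^{*}AX=I$, $X^{*}BX=\Lambda$, and consequently $X^{*}(A\#_tB)X=\Lambda^t$. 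Since biconjugation commutes with invertible linear changes of coordinates, it suffices to prove $\tilde p_t^{**}(w)\approx q(w)$, where $\tilde p_t(w):=|w|^{1-t}(\sum_i\lambda_iw_i^2)^{t/2}$ and $q(w):=(\sum_i\lambda_i^tw_i^2)^{1/2}$.

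For the inequality $q\leq\tilde p_t^{**}$, I would first establish the pointwise bound $q\leq\tilde p_t$ by writing $\sum_i\lambda_i^tw_i^2=\sum_i(w_i^2)^{1-t}(\lambda_iw_i^2)^t$ and applying H\"older with exponents $(1/(1-t),1/t)$. Since $q$ is itself a norm and $\tilde p_t^{**}$ is the largest norm pointwise dominated by $\tilde p_t$---its unit ball being $\overline{\mathrm{conv}}\{\tilde p_t\leq 1\}$ by the bipolar theorem---this yields $q=q^{**}\leq\tilde p_t^{**}$.

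For the reverse inequality $\tilde p_t^{**}\lesssim q$, I would use the convex-envelope representation that holds for any nonnegative, symmetric, positively $1$-homogeneous function:
$$\tilde p_t^{**}(w)=\inf\Bigl\{\sum_j\tilde p_t(w_j):w=\sum_jw_j\Bigr\}.$$
Testing this with the coordinate decomposition $w=\sum_{j=1}^dw_{(j)}e_j$ and noting that $\tilde p_t(w_{(j)}e_j)=|w_{(j)}|^{1-t}(\lambda_jw_{(j)}^2)^{t/2}=|w_{(j)}|\lambda_j^{t/2}$, Cauchy--Schwarz gives $\sum_j|w_{(j)}|\lambda_j^{t/2}\leq\sqrt{d}\,q(w)$, whence $\tilde p_t^{**}(w)\leq\sqrt{d}\,q(w)$. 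The main obstacle is justifying the convex-envelope representation itself, since $\tilde p_t$ is not a norm and so this is not standard norm duality; however, it follows from the bipolar theorem applied to the (possibly nonconvex) symmetric closed sublevel set $\{\tilde p_t\leq 1\}$, together with the routine Minkowski-functional decomposition identifying the gauge of $\overline{\mathrm{conv}}\{p\leq 1\}$ with $\inf\{\sum p(v_j):v=\sum v_j\}$ for a $1$-homogeneous gauge $p$. Everything else reduces to direct computation in the diagonal model.
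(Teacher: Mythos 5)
Your argument is correct and complete. Note that the paper itself offers no proof of this lemma --- it is quoted verbatim from \cite{Mar2022} --- so there is nothing internal to compare against; what you have written is a self-contained elementary proof. All the key steps check out: the congruence $X=A^{-1/2}U$ does satisfy $X^{\ast}AX=I$, $X^{\ast}BX=\Lambda$ and $X^{\ast}(A\#_tB)X=\Lambda^t$; biconjugation does commute with invertible substitutions, since $(p\circ X)^{\ast}=p^{\ast}\circ(X^{\ast})^{-1}$ and hence $(p\circ X)^{\ast\ast}=p^{\ast\ast}\circ X$; the H\"older step gives $q\leq\tilde p_t$ with constant $1$, and since $\tilde p_t^{\ast\ast}$ is the gauge of $\overline{\mathrm{conv}}\{\tilde p_t\leq 1\}$ (hence the largest norm below $\tilde p_t$), the lower bound $q\leq\tilde p_t^{\ast\ast}$ follows; and the infimal decomposition formula for the gauge of the convex hull of the sublevel set of a $1$-homogeneous, symmetric function that is two-sidedly comparable to $|\cdot|$ (which $\tilde p_t$ is, because all $\lambda_i>0$) is legitimate in finite dimensions, where $\mathrm{conv}\{\tilde p_t\leq 1\}$ is already compact by Carath\'eodory so no closure issues arise. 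Testing with the coordinate decomposition then yields $\tilde p_t^{\ast\ast}\leq\sqrt{d}\,q$, so the equivalence holds with constants $1$ and $\sqrt{d}$. Two housekeeping points: the statement as quoted allows $A,B\in\mathcal{S}_d$ merely positive semidefinite, whereas your diagonalization (and indeed the definition of $A\#_tB$ in the paper's Definition 4.4) requires positive definiteness, so you should state that hypothesis explicitly; and it is worth one sentence to record that $\tilde p_t^{\ast}$ is finite and positive definite (equivalently, $\{\tilde p_t\leq 1\}$ is a bounded neighborhood of the origin) before invoking the bipolar theorem.
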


Now, we prove the reverse factorization property.
\begin{theorem}
Given $1\leq p_0,p_1<\infty$, suppose that matrix mappings $W_0\in\mathcal{A}_{p_0,matrix}$, $W_1\in\mathcal{A}_{p_1,matrix}$, and $\sup_{|v|=1}|W_0(x)v|$, $\sup_{|v|=1}|W_1(x)v|$ are bounded for $x\in\mathbb{R}^n$. Then for any $0<t<1$, $\overline{W}=(W_0^2\#_tW_1^2)^{1/2}\in\mathcal{A}_{p,matrix}$, where
$$\frac{1}{p}=\frac{1-t}{p_0}+\frac{1}{p_1}.$$
\end{theorem}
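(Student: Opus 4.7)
The plan is to deduce the theorem from the Riesz--Thorin interpolation theorem (Theorem 4.1) applied to the averaging operator $A_Q$ with the norm functions induced by $W_0$ and $W_1$, and then to identify the resulting interpolated norm using the geometric-mean lemma (Lemma 4.9).

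First I would translate the matrix hypotheses into norm-function boundedness statements. Setting $\rho_i(x,v) := |W_i(x)v|$ for $i=0,1$, Lemma 4.7 turns $W_i \in \mathcal{A}_{p_i,matrix}$ into $\rho_i \in \mathcal{A}_{p_i,norm}$, and Lemma 4.8 then says $A_Q : L^{p_i}_{\mathcal{K}}(\mathbb{R}^n,\rho_i) \to L^{p_i}_{\mathcal{K}}(\mathbb{R}^n,\rho_i)$ is bounded with an operator norm $M_i$ uniform in the cube $Q$. The operator $A_Q$ is linear and monotone by Lemma 2.1, and the technical hypothesis of Theorem 4.1 (boundedness of $\rho_i(A_Q(\chi_A S))$ whenever $\rho_i(\chi_A S)$ is bounded) follows from the standing assumption that $\sup_{|v|=1}|W_i(x)v|$ is bounded in $x$.

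With these verifications in place, Theorem 4.1 applied with $p_i = q_i$ and interpolation parameter $t$ yields
$$\lVert A_Q(F)\rVert_{L^p_{\mathcal{K}}(\mathbb{R}^n,\rho_t^{\ast\ast})} \leq M_t \lVert F\rVert_{L^p_{\mathcal{K}}(\mathbb{R}^n,\rho_t^{\ast\ast})},$$
where $\rho_t(x,v) = |W_0(x)v|^{1-t}|W_1(x)v|^{t}$ and $1/p = (1-t)/p_0 + t/p_1$, with $M_t$ depending only on $M_0, M_1, t$, and hence independent of $Q$. Next I would invoke Lemma 4.9 pointwise in $x$ with $A = W_0(x)^2$ and $B = W_1(x)^2$, so that $|A^{1/2}v| = \rho_0(x,v)$ and $|B^{1/2}v| = \rho_1(x,v)$; the lemma then gives
$$\rho_t^{\ast\ast}(x,v) \approx |(W_0(x)^2 \#_t W_1(x)^2)^{1/2} v| = |\overline{W}(x) v| = \rho_{\overline{W}}(x,v)$$
with a constant depending only on $d$. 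Hence the norms on $L^p_{\mathcal{K}}(\mathbb{R}^n,\rho_t^{\ast\ast})$ and $L^p_{\mathcal{K}}(\mathbb{R}^n,\rho_{\overline{W}})$ are equivalent and $A_Q$ is bounded on the latter, uniformly in $Q$. Applying Lemma 4.8 in the reverse direction yields $\rho_{\overline{W}} \in \mathcal{A}_{p,norm}$, and then Lemma 4.7 concludes $\overline{W} \in \mathcal{A}_{p,matrix}$.

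The principal obstacle is bookkeeping on constants: the $\mathcal{A}_{p,matrix}$ characterization demands a supremum over cubes, so one must verify that the constant $M_t$ coming out of the Riesz--Thorin step depends only on the uniform endpoint constants $M_0, M_1$ and not on $Q$ itself. A subtler point is the $x$-uniformity of the equivalence in Lemma 4.9; because that constant depends only on the ambient dimension $d$, the pointwise equivalence passes to the $L^p_{\mathcal{K}}$ norm without difficulty, but this should be explicitly noted so that the transfer of boundedness from $\rho_t^{\ast\ast}$ to $\rho_{\overline{W}}$ is legitimate.
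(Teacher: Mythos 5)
Your proposal is correct and follows essentially the same route as the paper: define $\rho_i(x,v)=|W_i(x)v|$, convert the matrix hypotheses to boundedness of $A_Q$ via Lemmas 4.7 and 4.8, interpolate with Theorem 4.1 to get boundedness on $L^p_{\mathcal K}(\mathbb{R}^n,\rho_t^{\ast\ast})$, identify $\rho_t^{\ast\ast}\approx\rho_{\overline W}$ by Lemma 4.9, and run Lemmas 4.8 and 4.7 in reverse. Your write-up is in fact slightly more careful than the paper's, since you explicitly verify the linearity/monotonicity and technical hypotheses of Theorem 4.1 and the $Q$-uniformity of the interpolated constant.
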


\begin{proof}
Define the following norm functions:
$$\rho_0(x,v)=|W_0(x)v|,\ \ \rho_1(x,v)=|W_1(x)v|\ \ and\ \ \rho(x,v)=|\overline{W}(x)v|\ \ for\ x\in\Omega,\ v\in\mathbb{R}^d.$$
By the assumption of $W_0$ and $W_1$, Lemma 4.7 and Lemma 4.8, for any cube $Q$,
$$A_Q:L^{p_0}_{\mathcal{K}}(\mathbb{R}^n,\rho_0)\rightarrow L^{p_0}_{\mathcal{K}}(\mathbb{R}^n,\rho_0),\ \ A_Q:L^{p_1}_{\mathcal{K}}(\mathbb{R}^n,\rho_1)\rightarrow L^{p_1}_{\mathcal{K}}(\mathbb{R}^n,\rho_1).$$
Therefore, by Theorem 4.1,
$$A_Q:L^{p}_{\mathcal{K}}(\mathbb{R}^n,\rho_t^{\ast\ast})\rightarrow L^{p}_{\mathcal{K}}(\mathbb{R}^n,\rho_t^{\ast\ast}).$$
Using Lemma 4.8 again, we have $\rho_t^{\ast\ast}\in\mathcal{A}_p$. By Lemma 4.9, $\rho_t^{\ast\ast}(x,v)\approx\rho(x,v)$, which implies that $\rho\in\mathcal{A}_{p,norm}$. Finally by Lemma 4.7, we complete the proof.
\end{proof}

\bigskip \medskip\noindent
\textbf{\bf Acknowledgments}\\
The authors thank the referees for their careful reading and helpful comments which indeed improved the presentation of this article.\\
\textbf{\bf Funding information}\\
The research was supported by Natural Science Foundation of China (Grant No. 12061069).\\
\textbf{\bf Authors contributions}\\
All authors have accepted responsibility for the entire content of this manuscript and approved its submission.\\
\textbf{\bf Conflict of interest}\\
Authors state no conflict of interest.
\bigskip \medskip

\noindent Yuxun Zhang and Jiang Zhou\\
\medskip
\noindent
College of Mathematics and System Sciences, Xinjiang University, Urumqi 830046\\
\smallskip
\noindent{E-mail }:
\texttt{zhangyuxun64@163.com} (Yuxun Zhang);
\texttt{zhoujiang@xju.edu.cn} (Jiang Zhou)\\

\bibliographystyle{unsrt}
\bibliography{Reference.bib}
\end{document}